\title{Hilbert Function and Betti Numbers of Algebras with Lefschetz Property of Order $m$}
\author{ALEXANDRU CONSTANTINESCU}
\def\k#1{\ensuremath{K[x_{1} , \ldots , x_{#1}]}}
\def\HF{Hilbert function}
\def\x#1#2{\ensuremath{(x_{1} , \ldots , x_{#1})^{#2}}} 
\def\R/I{\ensuremath{R/I}}
\def\M#1#2{\ensuremath{M_{#2 , 1} ~,~ \ldots ~,~ M_{#2 ,#1}}}
\def\Mx#1#2{\ensuremath{M_{#2,1}x_{n}^{d-#2} ~,~ \ldots ~,~ M_{#2,#1}x_{n}^{d-#2}}}
\def\T#1{\ensuremath{T_{1}~,~ \ldots ~,~ T_{#1}}}
\def\Gens{\textup{Gens}}
\def\Gin{\textup{Gin}}
\def\Lex{\textup{Lex}}
\begin{document}
\maketitle
\begin{abstract}
The authors T.Harima, J.C.Migliore, U.Nagel and J.Watanabe characterize in \cite{migliore} the \HF~of algbebras with the Lefschetz property. We extend this characterization to algebras with the Lefschetz property  $m$ times. 
We also give upper bounds for the Betti numbers of Artinian algebras with a given \HF~ and with the Lefschetz property $m$ times and describe the cases in which these bounds are reached.
\end{abstract}
\section{Introduction}

Let  $K$  be an infinite field of characteristic 0 and let  $A = \bigoplus_{d\geq 0} A_{d}$ be a homogeneous $K$-algebra, that is an algebra of the form $R/I$, where  $R$  is the polynomial ring in $n$ variables \k{n} and $I$ is a homogeneous ideal. 
We will denote  $h_{d}(A) = \dim_{K}(A_{d})$  and by 
$h(A)$ the \HF ~of $A$.

\newtheorem{def1}{Definition}[section]
\begin{def1}  

We say that an Artinian algebra $A$ has the \emph{weak Lefschetz Property (WLP) } if  there exists $\ell~\in~A_{1}$ such that  the  multiplication $\times\ell$~:~$A_{d} {\longrightarrow}~A_{d+1}$ has maximal rank  for every $d \geq1$. \\
Such an element $\ell$ is called a \emph{weak Lefschetz Element (WLE)} for $A$.

We say that $A$ has  \emph{$m$-times the weak Lefschetz Property} $(m \in \mathbb{N})$ if  there exist $\ell_{1},~\ldots~,~\ell_{m}~\in~A_{1}$ such that $\ell_{1}$ is a \emph{WLE} for A and  $\ell_{i} $ is a \emph{WLE} for $A/(\ell_{1},\ldots,\ell_{i-1}) , \forall~ i \in 2,\ldots ,m$. 
\end{def1}

The following definition uses the notion of O-sequence, which for us will just mean a sequence of natural numbers that can be the Hilbert function of some  graded  $K$-algebra. For more details on O-sequences see \cite{stan} or  \mbox{\cite[Chapter 4.2]{bh}.}

\newtheorem{def2}[def1]{Definition}
\begin{def2}

Let $ h~:~1 = h_{0},~ h_{1},~ \ldots,~ h_{s} $ be a finite O-sequence.\\
We say that $h$ is a \emph{weak Lefschetz O-sequence} if : 
\begin{itemize}
\item[-] $h$ is \emph{unimodal} $($i.e. $h_{0} < h_{1} < \ldots < h_{k} \geq h_{k+1} \geq \ldots  \geq h_{s}$ for some $k \in 0, \dots, s
)$.
\item[-] the sequence $1, h_{1} - h_{0}, \ldots , h_{k} - h_{k-1}$ is again an O-sequence.
\end{itemize}
Inductively, we say that $h$ is a \emph{$m$-times weak Lefschetz O-sequence} if:
\begin{itemize}
\item[-] $h$ is unimodal .
\item[-] the sequence $1, h_{1} - h_{0}, \ldots , h_{k} - h_{k-1}$ is  a $(m-1)$-times weak Lefschetz O-sequence.
\end{itemize}
\end{def2}
%END PAGE 1

The WLP is an important property of Artinian algebras and it has been recently studied by several authors. The $m$-times WLP is  just a very natural generalization of it. For an overview  of the main results achieved  so far regarding this topic see \cite{migliore}, \cite{migliore2}. One  interesting problem is the description of the Hilbert function of Artinian algebras having the WLP. In \cite{migliore} the authors give a complete characterization of these Hilbert functions. First they make the remark that if and Artinian algebra has the WLP, then its Hilbert function must be a weak Lefschetz O-sequence in the sense of definition 1.2. and then they construct an Artinian algebra with the WLP for each  weak Lefschetz O-sequence.

In this paper we extend this characterization to Artinian algebras with $m$-times the WLP and we construct, in a more algebraic fashion, an algebra for each $m$-times weak Lefschetz O-sequence. We also answer a few natural questions regarding the Betti numbers of these algebras.

At first we will construct, using induction on $m$, and algebra that will have $m$-times the WLP. To do this, we need to start with a strongly stable ideal of the polynomial ring in one variable less than we actually need. For the case $m = 1$ we will start from the  lex-segment ideal, but the choice of the lex-segment ideal is made only in order to obtain maximal Betti numbers within the class.

The proof of the fact that the algebra we construct has $m$-times the WLP is based on a slight generalization of the description given by A.Wiebe in \cite{wiebe} of the Artinian algebras with the WLP which are the quotients of the polynomial ring by a strongly stable ideal. 
%From the proof it can be seen that  in order for the construction to work we only need that the ideal from which we start is strongly stable.

In Section 4 we show first that the algebra we construct has maximal Betti numbers among algebras with a given \HF~ and $m$-times the WLP. For this the choice of the lex-segment ideal is needed, but again it is not the only way to obtain such an algebra. In the second part of this section we give a complete description of the Artinian algebras with given \HF, $m$-times the WLP and maximal Betti numbers within this class. 
The last part of this section is dedicated to the rigidity property of these algebras. More precisely, if the upper bound is reached by the $i$-th Betti number, then it is reached also by the $k$-th Betti number, for all $k > i$.

In the fifth section we show that, by slightly  modifying our construction, we can obtain an ideal whose components of low degree define a radical ideal. To do this we will use some particular type of distraction matrix.

The results and examples presented in this paper have been inspired and suggested by computations performed by the computer algebra system CoCoA.\\

The author wishes to thank his advisor, Prof.~Aldo Conca, for his encouragement, for suggesting this problem and for his very helpful remarks on preliminary versions of this paper.

After this paper was written we were informed  that  Tadahito Harima and Akihito Wachi  have  independently  obtained some of the  results on this paper  by different methods (see \cite{harima}).

%It is already known from \cite{migliore} that if $A$ is an Artinian algebra which has the WLP, then its Hilbert function must be a weak Lefschetz O-sequence. By induction, one can immediately see that if $A$ has $m$-times the WLP, then its Hilbert function must be a $m$-times weak Lefschetz O-sequence.

%Starting from a $m$-times weak Lefschetz O-sequence $h$, we will construct an Artinian algebra  that has $m$-times the WLP and has $h$ as its Hilbert function. We will also prove that this algebra has maximal Betti numbers among Artinian algebras with $m$-times the WLP and Hilbert function $h$. We will also describe all  the other ideals for which $R/I$  has maximal Betti numbers among algebras with $m$-times the WLP.    \\

\section{Preliminaries}

If $I \subset R$ is a monomial ideal, the minimal monomial generating set of $I$ will be denoted by $\Gens(I)$. We will use also the following notion: 
\newtheorem{def3}{Definition}[section] 
\begin{def3}
A monomial ideal $I \subset R$ is called \emph{strongly stable} if:\\
For each monomial $M \in I$ and for each variable $x_{k}$ that divides $M$, we have $(\frac{x_{i}}{x_{k}})M~\in~I, ~\forall ~i < k$.
\end{def3}
It is easy to see that in order to verify if a monomial ideal is strongly stable, it is enough to verify the condition above only for the monomials in $\Gens(I)$.

To a homogeneous ideal $I \subset R$ one can canonically attach the \emph{generic initial ideal of $I$}, $ \Gin(I)$ with respect to the rev-lex order. By definition $\Gin(I)$ is the initial ideal of $I$ with respect to the rev-lex order after performing a generic change of coordinates.

For a homogeneous ideal $I \subset R$ one can also define the \emph{lex-segment ideal associated to $I$} as follows. In general, for a vector space $V$ of forms of degree $d$, one defines the vector space $\Lex(V)$ to be the vector space generated by the largest $\dim(V)$ forms in lexicographic order. Then, one defines $\Lex(I) :=  \bigoplus_{d} \Lex(I_d)$. Macaulay's theorem on \HF s (see for instance \cite{valla}) guarantees that $\Lex(I)$ will actually be an ideal, not just a graded vector space. One can immediately notice that the construction of $\Lex(I)$ depends only on the \HF~ of $R/I$, so for an O-sequence $h$ we will denote  by $\Lex(h)$ the lex-segment ideal for which the \HF~ of $R/\Lex(h)$ is $h$.
%For an O-sequence $h :  h_{0}, h_{1}, \ldots $ one can construct the \emph{lex-segment ideal}, $\Lex(h)$. This ideal is defined as follows. For every $i \ge 0$ we define the vector space of forms of degree $i$ generated by the largest (in lexicographic order) $h_{i}$ monomials  of degree $i$, $\Lex(h)_{i}$ and we set $\Lex(h) = \bigoplus_{i} \Lex(h)_{i}$.   One can associate the lex-segment ideal also to a homogeneous ideal $I$. By definition $\Lex(I) = \Lex(h(R/I))$, where $h(R/I)$ is the Hilbert function of $R/I$.

These ideals play a fundamental role in the investigation of many algebraic, homological, combinatorial and geometric properties of $I$ itself. We recall here some of their  properties  that we will use later.\\

When passing from an ideal to its generic initial ideal, the Hilbert function does not change. An important property of the generic initial ideal is that, in characteristic zero, it is strongly stable. The following result shows how the weak Lefschetz property is reflected by the generic initial ideal:
\newtheorem{thm01}[def3]{Proposition}
\begin{thm01} Let $I \subset R$ be an ideal such that $R/I$ is an Artinian algebra. Then $R/I$ has $m$-times the weak Lefschetz property if and only if $R/\Gin(I)$ has $m$-times the weak Lefschetz property. 
\end{thm01} 
 
 This result can be found for $m = 1$ in \cite{wiebe}. To see that it holds for $m>1$ one just has to follow the same proof and use  \cite[Lemma 2.1]{co1} for \mbox{$m$ linear forms.}\\
%END PAGE 2

The graded Betti numbers of $I$, $\Gin(I)$ and $\Lex(I)$ satisfy the following inequalities (for details, see \cite{co2}):

\newtheorem{thm02}[def3]{Theorem}
\begin{thm02}
\begin{enumerate}
\item[\emph{(a)}]
$\beta_{ij}(R/I) \le \beta_{ij}(R/\Gin(I))$  $\forall ~i, j;$
\item[\emph{(b)}]
$\beta_{ij}(R/I) \le \beta_{ij}(R/\Lex(I))$  $\forall ~i, j.$
\end{enumerate}
\end{thm02}

Recall that a homogeneous ideal $I$ is said to be \emph{componentwise linear} if for all $k \in \mathbb{N}$ the ideal $I_{<k>}$ generated by the elements of degree $k$ in $I$ has a linear resolution.

Also, $I$ is said to be a \emph{Gotzmann} ideal if for all $k \in \mathbb{N}$ the space $I_{k}$ of forms of degree $k$ in $I$ has the smallest possible span in the next degree according to the Macaulay inequality (see \cite[Theorem 3.1]{valla}), that is  $\dim_{K}R_{1}I_{k} = \dim_{K}R_{1}\Lex(I_{k})$.

 Aramova, Herzog and Hibi characterized in \cite{bettigin} the ideals that have the same Betti numbers as their generic initial ideal as follows:

\newtheorem{thm03}[def3]{Theorem}
\begin{thm03}
The following conditions are equivalent:
\begin{enumerate}
\item[\emph{(a)}]
$\beta_{ij}(R/I) = \beta_{ij}(R/\Gin(I))$ , $\forall~ i , j;$ 
\item[\emph{(b)}]
$I$ is componentwise linear.
\end{enumerate}
\end{thm03}
 
Ideals with the same Betti numbers as the lex-segment ideal were characterized by Herzog and Hibi in \cite{bettilex}:

\newtheorem{thm04}[def3]{Theorem}
\begin{thm04}
The following conditions are equivalent:
\begin{enumerate}
\item[\emph{(a)}]
$\beta_{ij}(R/I) =\beta_{ij}(R/\Lex(I))$ , $\forall~ i , j;$ 
\item[\emph{(b)}]
$I$ is a Gotzmann ideal.\\
\end{enumerate}
\end{thm04}

The fact that if an Artinian algebra has $m$-times the WLP, then its \HF ~ is a $m$-times weak Lefschetz O-sequence follows immediately from \cite[Remark 3.3]{migliore}. The unimodality of the \HF ~is a consequence of the natural grading of the algebra. This guarantees  that if $\times \ell_{1}: A_{j} \longrightarrow A_{j+1}$ is surjective  then $\times \ell_{1}: A_{d} \longrightarrow A_{d+1}$ is surjective $\forall ~d\ge j$.
The second part of the definition of a $m$-times weak Lefschetz O-sequence is guaranteed by the fact that $A/(\ell)$ in an algebra with $(m-1)$-times the WLP and with \HF ~ $1, h_{1} - h_{0}, \ldots , h_{k} - h_{k-1}$.
%section 2
\section{The construction of $R/\mathcal{W}_{m}(h)$}

Fix $h : 1=h_{0} < h_{1} < \ldots < h_{k} \ge h_{k+1} \ge \ldots \ge h_{s}$ a $m$-times weak Lefschetz O-sequence.  We will denote by $\Delta h$ the $(m-1)$-times weak Lefschetz \mbox{O-sequence:} $1,~ h_{1} - h_{0},~ \ldots ,~ h_{k} - h_{k-1}$. Inductively we will denote $\Delta^{1}h = \Delta h$ and  by $\Delta^{i}h$ the $(m-i)$-times  weak Lefschetz O-sequence given by $\Delta(\Delta^{i-1}h)$ for $i = 1, \ldots, m$.

For every finite O-sequence $h_{0}, h_{1}, \ldots , h_{s} , 0 , 0 , \ldots $ with $h_{s} \neq 0$ we will say that the length of $h$ is $s$.
Returning to our $m$-times weak Lefschetz O-sequence we will denote by $k_{i}$ the length of $\Delta^{i}h$ for every $i = 1, \ldots , m$.  Notice that $k = k_{1} \ge k_{2} \ge \ldots \ge k_{m}$. \\    
%END PAGE 3

We will construct an ideal $\mathcal{W}_{m}(h)$ of $R$ such that $R/\mathcal{W}_{m}(h)$ will be the algebra we are looking for. We will first construct $\mathcal{W}_{1}(h)$, and then use induction to construct $\mathcal{W}_{m}(h)$ in the general case. 

\subsection{The case $m$=1}

Let $n = h_1$ and consider $I_{0}$ to be the lex-segment ideal of $R' = \k{n-1} $ with \HF ~ $\Delta h$. 
Now we define $I_{1}$ to be the ideal $I_{0} R$  of $R$ . It is easy to see that the \HF~ of $R/I_{1}$ is: 
\begin{displaymath}
1 = h_{0},~ h_{1},~ \ldots,~ h_{k-1},~ h_{k},~  h_{k},~ \ldots,~ h_{k},~ \ldots 
\end{displaymath}

Also, as \x{n-1}{k+1} $\subseteq I_{0}$ , we have that \x{n-1}{k+1} $\subseteq I_{1}$. So we know that all the monomials of degree $\ge k+1$ in $R$ that are not in $I_{1}$ are divisible by $x_{n}$.

In every degree $d$ we will arrange the monomials of $R$ which are not in $I_{1}$ in decreasing rev-lex order. Then we will add to the generators of $I_{1}$ the largest monomials in each degree such that we obtain the right \HF. But we first have to check how the \HF~ changes at each step in order to guarantee that this construction can be done.\\

Let $d_{0}$ be the lowest degree in which the \HF ~of $R/I_{1}$ differs from $h$. As this happens in degree higher than $k$, we know by the unimodality of $h$ that $h_{d_{0}}(R/I_{1}) > h_{d_{0}}$. So there are "too many"  monomials of degree $d_{0}$ that are not in $I_{1}$. 

We define  \mbox{$r_{0} = h_{d_{0}}(R/I_{1}) - h_{d_{0}}$}. Let $\T{r_{0}}$  be the largest (in  rev-lex order) $r_{0}$  monomials of degree $d_{0}$ not in $I_{1}$. Now we define:
\begin{displaymath}
I_{2} := I_{1} + (\T{r_{0}}).
\end{displaymath}
We want to show that the \HF of $R/I_{2}$ is: 
\begin{displaymath}
1 = h_{0},~ h_{1},~ \ldots,~ h_{d_{0}-1},~ h_{d_{0}},~  h_{d_{0}},~ \ldots,~ h_{d_{0}},~ \ldots 
\end{displaymath}

Obviously the \HF~ of $R/I_{2}$ is equal to the one of $R/I_{1}$ in degree smaller than $d_{0}$ and now also in degree $d_{0}$ it is exactly $h_{d_{0}}$.

Denote by \M{u_{i}}{i} $\in R'$ the  monomials of  degree $i$ which are not in the original $I_{0}$ ($u_{i}$ will be equal to $h_{i}-h_{i-1}$). These will be  the  monomials  on degree $\le k$ in the first $(n-1)$ variables that are not in $I_{1}$. So the monomials of degree $d > k$ that are not in $I_{1}$ are the following:
\begin{displaymath}
\Mx{u_{k}}{k}~ ,~ \ldots ~,~ \Mx{u_{1}}{1} 
\end{displaymath}

We have $T_{1} = M_{k,1}x_{n}^{d_{0}-k}$, and let $i_{0}$ and $j_{0}$ be the index for which $T_{r_{0}}~=~M_{i_{0},j_{0}}x_{n}^{d_{0} - i_{0}}$ (the $r_{0}$-th largest monomial of degree $d_{0}$ not in  $I_{1}$ ).  After adding to $I_{1}$ these first  $r_{0}$ monomials,   we get that $\dim((\R/I_{2})_{d}) \le h_{d_{0}}$ for $d > d_{0}$.\\
%END PAGE 4
 
 Suppose  there exists a  monomial of degree $d > d_{0}$, $M_{t,r}x_{n}^{d-t} \notin I_{1}$, with  ($t < i_{0}$) or ($t=i_{0}$~and~~$r > j_{0}$), i.e. that is not in those first $r_{0}$ monomials added to $I_{1}$, but $M_{t,r}x_{n}^{d-t} \in I_{2}$ .  As $M_{t,r} \notin I_{0} ,~~ M_{t,r}x_{n}^{d-t} $ must be divisible by a generator of $I_{2}$, who itself is divisible by $x_{n}$.  So it must be divisible by one of $M_{k,1}x_{n}^{d_{0} - k},~ \ldots ,~ M_{i_{0},j_{0}}x_{n}^{d_{0}-i_{0}}$.  

Let $M_{i,j}x_{n}^{d_{0}-i}$ be that monomial. It follows that $M_{i,j} | M_{t,r}$, so  $i \le t$. As $ i \ge i_{0} \ge t$ it follows that $i = t$.  So they have the same degree, but $ r > j_{0} \ge j$ so they are different and the divisibility can not take place - a contradiction.\\
So the only monomials that belong to $I_{2}$ but not to $I_{1}$ in degree $d \ge d_{0}$ are exactly $M_{k,1}x_{n}^{d - k},~ \ldots ,~ M_{i_{0},j_{0}}x_{n}^{d -i_{0}}$.\\

So we have shown that after adding the necessary monomials to $I_1$ in the first degree where this is needed ($d_{0}$), the \HF~ of the new algebra $R/I_{2}$ will become: 
\begin{displaymath}
1 = h_{0},~ h_{1},~ \ldots,~ h_{d_{0}-1},~ h_{d_{0}},~  h_{d_{0}},~ \ldots,~ h_{d_{0}},~ \ldots 
\end{displaymath}

This procedure can be repeated as from degree $> k$ the original weak Lefschetz O-sequence is decreasing, and after a finite number of steps (at most $s - k$) we will obtain a new ideal, which we will denote by $\mathcal{W}_{1}(h)$, such that $R/\mathcal{W}_{1}(h)$ has the desired Hilbert function. \\

So we have constructed a monomial  ideal with \HF~ $h$ and with the property that $\x{n-1}{k+1} \subseteq \mathcal{W}_{1}(h)$. We also have that all the generators which are divisible by $x_{n}$ appear in degree $\ge k+1$ and that the generators not divisible by $x_{n}$ appear  in degree $\le k+1$.

In order to be able to apply induction we will need to prove the following:

\newtheorem{strstab}{Lemma}[section]
\begin{strstab}
The ideal $\mathcal{W}_{1}(h)$ is strongly stable.
\end{strstab}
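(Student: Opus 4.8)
The plan is to verify strong stability through the complementary description of the property: a monomial ideal $J\subset R$ is strongly stable if and only if, in every degree $d$, the set of degree-$d$ monomials \emph{not} in $J$ is closed under the exchange $N\mapsto (x_j/x_i)N$ whenever $x_i\mid N$ and $i<j$. (Indeed, if $(x_j/x_i)N\in J$ then, $x_j$ dividing it and $i<j$, the strong stability of $J$ would force $N\in J$; the converse is equally immediate.) Write $S_d$ for the set of degree-$d$ monomials not in $\mathcal{W}_1(h)$; it suffices to prove that each $S_d$ is closed under this exchange.

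I would first record two facts. (i) $I_1=I_0R$ is strongly stable: its minimal generators are those of the lex-segment ideal $I_0$, which is strongly stable, they involve only $x_1,\dots,x_{n-1}$, and they already satisfy the exchange condition inside $R'$. Hence, by the description above, the set $B_d$ of degree-$d$ monomials not in $I_1$ is closed under $N\mapsto(x_j/x_i)N$ for $i<j$. Concretely $B_d=\{\,Mx_n^{\,d-\deg M}:M\notin I_0,\ \deg M\le d\,\}$, and once $d\ge k$ every such $M$ satisfies $\deg M\le k$ because $\x{n-1}{k+1}\subseteq I_0$. (ii) If $i<j$, then $(x_j/x_i)N$ is strictly smaller than $N$ in the rev-lex order: the difference of their exponent vectors is $e_j-e_i$, whose last nonzero entry is $+1$, in position $j$.

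The crux is the claim that \emph{$S_d$ consists of the $h_d$ rev-lex-smallest elements of $B_d$}, for every $d$. For $d\le k$ this is clear: nothing is adjoined to $I_1$ in those degrees, and $|B_d|=h_d$. For $d>k$ the point is that multiplication by $x_n$ is, for all $d\ge k$, an order-preserving bijection $B_d\to B_{d+1}$ — the exponent-vector difference of two elements of $B_d$ does not change when both are multiplied by $x_n$, and the rev-lex comparison depends only on that difference. Combining this with the fact proved above, whose argument applies verbatim at every step, that passing from $I_t$ to $I_{t+1}$ deletes, in each degree $d\ge d_t$, precisely the $x_n$-shifts of the $r_t$ rev-lex-largest degree-$d_t$ monomials not in $I_t$, one gets inductively that the $t$-th step removes, in every degree $d\ge d_t$, exactly the next $r_t$ monomials from the top of $B_d$; since the Hilbert-function computation of the construction guarantees that $|B_d|-h_d$ monomials are removed in degree $d$, the monomials remaining are exactly the bottom $h_d$ of $B_d$.

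Granting the claim, the Lemma is immediate: if $N\in S_d$, $x_i\mid N$ and $i<j$, then $(x_j/x_i)N\in B_d$ by (i) and $(x_j/x_i)N$ is rev-lex-smaller than $N$ by (ii); as $N$ is among the $h_d$ smallest elements of $B_d$, so is the strictly smaller monomial $(x_j/x_i)N$, i.e.\ $(x_j/x_i)N\in S_d$. The step I expect to require the most care is the one establishing the claim for $d>k$: one must be sure that the monomials deleted at the successive critical degrees, once pushed up by powers of $x_n$, really do assemble into a single rev-lex top-segment of $B_d$ in each degree, and that no over-removal ever leaves a proper sub-segment. Both points follow from the order-preservation of multiplication by $x_n$ on the family $\{B_d\}_{d\ge k}$ together with the counting already carried out when $I_2$ was constructed; after that, the argument is purely formal.
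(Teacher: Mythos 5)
Your proof is correct and takes essentially the same route as the paper's: the paper checks the Borel exchange directly on the minimal generators (those prime to $x_n$ via the strong stability of the lex-segment ideal $I_0$, those divisible by $x_n$ via the fact that $x_iM/x_j$ is rev-lex larger than $M$ while the added generators are the rev-lex largest monomials outside $I_1$ in each degree), which is precisely the dual of your observation that the complement of $\mathcal{W}_1(h)$ is a rev-lex bottom-segment of the complement of $I_1$ and hence closed under the rev-lex-decreasing exchange $N\mapsto(x_j/x_i)N$. The additional bookkeeping you supply (the order-preserving bijections $B_d\to B_{d+1}$ given by multiplication by $x_n$) is the same verification the paper performs when computing the Hilbert function of $I_2$, so both arguments rest on identical ingredients.
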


\begin{proof}
By construction $\mathcal{W}_{1}(h)$ is a monomial ideal. Let $M \in \Gens(\mathcal{W}_{1}(h))$ be a monomial of degree $d$. We want  to prove that $x_i\frac{M}{x_j} \in \mathcal{W}_{1}(h),  \forall ~j$ such that $x_j | M$ and $\forall~ i < j$.

  We distinguish two cases:\\
1. If $x_{n} \not| M$, then $M$ could be seen as a monomial of $I_{0}$ which is the lex-segment ideal for $\Delta h$. As the lex-segment ideal is strongly stable, we get that $x_{i} \frac{M}{x_{j}} \in I_{0} , \forall~ j$ such that $x_{j} | M$ and $ \forall~ i < j$.\\
2. If $x_{n} | M$, then let $j \in 1,\ldots, n$ be such that $x_{j} | M$ and let $i < j$.  Then we have $x_{i} \frac{M}{x_{j}} \ge_{rev-lex} M  $ and so we must have that $x_{i} \frac{M}{x_{j}} \in \mathcal{W}_{1}(h)$  by construction, because we chose as generators the largest monomials in \mbox{rev-lex} order. \\
\end{proof}

\subsection{The general case}

Let $m \in \mathbb{N}$, $m \ge 2$. Assume we can construct an algebra $R'/\mathcal{W}_{m-1}(\Delta h) $, with \HF ~$\Delta h$,  such that $\mathcal{W}_{m-1}(\Delta h)$ is a strongly stable ideal of  $R' = \k{n-1}$ and that $\x{n-i}{k_{i} +1} \subseteq \mathcal{W}_{m-1}(\Delta h)$ for all $i = 2, \ldots, m-1$.\\
%END PAGE 5

Now we define $I_1 = \mathcal{W}_{m-1}(\Delta h) R$. The \HF~ of  $R/I_1$ will be $1~=~h_{0},~h_{1},~\ldots,~h_{k-1},~h_{k},~h_{k},~\ldots,~h_{k},~\ldots$  and following the method of adding the needed highest monomials in rev-lex order as in the case of $m = 1$, we can  construct an ideal $\mathcal{W}_{m}(h)$. The same arguments as in the case $m = 1$ prove that the construction can be done,
 because in that case  we didn't use the fact that  $I_{0}$ was the lex-segment ideal, we just used the fact that it was a strongly stable ideal. The choice of $I_{0}$ as the lex-segment ideal is needed for obtaining maximal Betti numbers.
 
In fact, since $\mathcal{W}_{m-1}(\Delta h)$ is strongly stable, the  proof of Lemma 2.1 works also for proving that $\mathcal{W}_{m}(h)$ is strongly stable.

\subsection{$R/\mathcal{W}_{m}(h)$ has $m$-times the WLP}

In this section we will show that the algebra we have constructed so far is actually what we wanted:

\newtheorem{WLP1}[strstab]{Proposition}
\begin{WLP1}
$R/\mathcal{W}_{m}(h)$ has $m$-times the weak Lefschetz Property.
\end{WLP1}

In order to prove this, we will use the following result from \cite{wiebe}:
\theoremstyle{plain} \newtheorem{lema1}[strstab]{Lemma}
\begin{lema1}
If $I$ is a strongly stable ideal of $R = \k{n}$ then:\\
\R/I has the WLP $\Longleftrightarrow x_{n}$ is a WLE for \R/I.
\end{lema1}

From this result we can deduce the following one:

\newtheorem{lema3}[strstab]{Lemma}
\begin{lema3}
 If $I$ is a strongly stable ideal of $R = \k{n}$, then the following are equivalent.
 \begin{enumerate}
 \item[\emph{1.}]
 \R/I has the WLP.
\item[\emph{2.}]
\begin{itemize} 
\item[\textup{(a)}] $h(R/I)$ is unimodal :
$h_{0}<h_{1}<\ldots<h_{k}\geq h_{k+1}\geq\ldots\geq h_{s}$, 
\item[\textup{(b)}] %\x{n-1}{k}  $\not\subseteq I$ and 
\x{n-1}{k+1} $\subseteq I$, 
\item[\textup{(c)}] If $M \in \Gens(I)$ is divisible by $x_{n}$, then  $\deg(M) \ge k+1$.
\end{itemize}
\end{enumerate}
\end{lema3}
 
 \begin{proof} 
 1.$\Rightarrow$2. The fact the \HF~ is unimodal is already known from \cite{migliore}.\\ By Lemma 3.3 we know that $x_{n}$ is a WLE for \R/I,  so the multiplication  $\times x_{n}: (R/I)_{d} \rightarrow (R/I)_{d+1}$ must be of maximal rank, i.e. injective if $d < k$ and surjective if $d \ge k$. This implies immediately that \x{n-1}{d} $\subseteq I$ for $d >k$.\\
 %If we had also \x{n-1}{k} $\subseteq I$, then all the non-zero monomials in $(R/I)_{k}$ would be of the form $x_{n}M$, with $\deg(M) = k-1$ and $M \notin I$, so the map \mbox{$\times x_{n} : (R/I)_{k-1} \rightarrow (R/I)_{k}$} would be surjective, and this cannot happen because~~ $\dim_{K}((R/I)_{k-1}) < \dim_{K}((R/I)_{k})$.\\
 Suppose that there is a minimal generator $M$ of $I$, which has degree $d < k+1$, and $x_{n} | M$. Then $\frac{M}{x_{n}} \neq 0$ in $(R/I)_{d-1}$ but is taken by the multiplication with $x_{n}$ to $M = 0$ in $(R/I)_{d}$ - a contradiction with the injectivity of $\times x_{n}$.
 %END PAGE 6
 
 2.$\Rightarrow$1. 
 We will show that $x_{n}$  is a WLE for \R/I.  As we have that \x{n-1}{k+1} $\subseteq I$,  it follows that the multiplication by $x_{n}$ is surjective in degree $\ge k$. \\
Let $ d < k$ suppose that there exists a monomial of degree $d$, $M  \in R$ and $M \notin I_d$,  such that $x_{n}M \in I_{d+1}$. This means that $x_{n}M$ is divisible by a minimal generator $G$ of $I$. As $\deg(G) \le k$, we have that $x_{n} \not| G$. This means that $G | M$ contradicting the fact that $M \notin I_d$. 
 \end{proof}

Let us notice that $2. \Rightarrow 1.$ of Proposition 3.4 holds also when $I$ is just a monomial ideal, not necessarily a strongly stable one.

Now we can prove proposition 3.2:
\begin{proof}
We will use induction on $m$:
%We will  do this first in the case $m = 1$ and then for any $m \in \mathbb{N}$.
If $m = 1$ we can see very easy that the conditions (a), (b) and (c) from Lemma 3.3 are satisfied by construction, so as  $\mathcal{W}_{1}(h)$ is strongly stable we can apply Lemma 3.3 and get that $R/\mathcal{W}_{1}(h)$ has the WLP.

Suppose that the proposition is  true for $m-1$. This means that the algebra $R'/\mathcal{W}_{m-1}(\Delta h)$ has $(m-1)$-times the WLP ($R' = \k{n-1}$).  As $\mathcal{W}_{m}(h)$ is strongly stable and again the conditions of Lemma 3.3 are satisfied, we get that $R/\mathcal{W}_{m}(h)$ has the WLP and, by Lemma 3.2, $x_{n}$ is a WLE. As by construction $R/\mathcal{W}_{m}(h)+(x_{n}) = R'/\mathcal{W}_{m-1}(\Delta h)$ which has by hypothesis $(m-1)$-times the WLP, we get that $R/\mathcal{W}_{m}(h)$ has $m$-times the WLP.
\end{proof}

%Now we can apply to $R/\mathcal{W}_{1}(h)$ Lemma 2.3.  As by construction the generators of $\mathcal{W}_{1}(h)$ that are divisible by $x_{n}$ appear in degree higher than $k$ and as \x{n-1}{k} $\subseteq \mathcal{W}_{1}(h)$ we can now say that $R/\mathcal{W}_{1}(h)$ has the WLP.\\

%\emph{Notice} that for 2. $\Rightarrow$ 1. we actually don't need the fact that $I$ is strongly stable. We just use the fact that $I$ is a monomial ideal to prove that $x_{n}$ is a WLE.

%So we just want to show now that the ideal we have constructed is strongly stable. But again the same proof as in the case $m = 1$ will work because we used just the fact that $I_{0}$ was strongly stable. 

%Again, we have \x{n-1}{k+1} ${\subseteq} \mathcal{W}_{m}(h)$ and the generators of $ \mathcal{W}_{m}(h)$ that are divisible by $x_{n}$ appear in degree $\geq (k + 1)$ so by Lemma 2.3 we have that $R/\mathcal{W}_{m}(h)$ has the WLP and $x_{n}$ as WLE. As $(R/ \mathcal{W}_{m}(h))/(x_{n}) = R'/\mathcal{W}_{m-1}(\Delta h)$ which has ($m-1$)-times the WLP, it follows that  $R/ \mathcal{W}_{m}(h)$ has $m$-times the WLP.  

%SECTION
\section{Ideals with Maximal Betti Numbers}

In this section we will first show that $R/\mathcal{W}_{m}(h)$ has maximal Betti numbers among algebras with \HF~ $h$ and $m$-times the WLP. Then we will characterize all other ideals that have maximal Betti numbers within this class. In the third part of this section we will show that these upper bounds are rigid.  

\subsection{ $R/ \mathcal{W}_{m}(h)$ has maximal Betti numbers}

%From now on we will denote by $ \mathcal{W}_{m}(h)$ the strongly stable ideal of the polynomial ring \mbox{$R = \k{n}$} that we have constructed so far, such that $R/ \mathcal{W}_{m}(h)$ has $m$-times the WLP.
We want to prove the following:
\newtheorem{maxbetti}{Proposition}[section]
\begin{maxbetti}
For any  algebra $R/J$ that has \HF~ $h$ and $m$-times the WLP we have:
\begin{equation} \label{eq:beta}
\beta_{ij}(R/J) \leq \beta_{ij}(R/ \mathcal{W}_{m}(h))~,~~~~~~    \forall i , j \ge 0
\end{equation}
\end{maxbetti}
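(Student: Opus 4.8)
The plan is to reduce to the case where the defining ideal is strongly stable, by passing to the generic initial ideal, and then to argue by induction on $m$, splitting off the last variable $x_{n}$ through the Eliahou--Kervaire resolution.

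First I would use Theorem~2.3~(a) to get $\beta_{ij}(R/J)\le\beta_{ij}(R/\Gin(J))$, together with Proposition~2.2, which says that $R/\Gin(J)$ again has $m$-times the WLP (clearly with the same \HF~$h$); since $\Gin(J)$ is strongly stable in characteristic zero, it suffices to treat the case in which $J$ itself is a strongly stable ideal $I$ with \HF~$h$ and $m$-times the WLP, and to prove $\beta_{ij}(R/I)\le\beta_{ij}(R/\mathcal{W}_{m}(h))$. As $I$ and $\mathcal{W}_{m}(h)$ are both strongly stable, I would compute graded Betti numbers from the Eliahou--Kervaire formula
\[
\beta_{p,\,p+j}(R/I)=\sum_{\substack{M\in\Gens(I)\\ \deg M=j+1}}\binom{\max(M)-1}{p-1},
\]
where $\max(M)$ is the largest index of a variable dividing $M$. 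Writing $R'=\k{n-1}$ and $I'=(I+x_{n}R)\cap R'$, the generators of $I$ not divisible by $x_{n}$ are precisely $\Gens(I')$ (a monomial of $R'$ lies in $I$ iff it lies in $I'$, and minimality is preserved) and contribute, with $\max$ computed in $R$ or in $R'$ indifferently, exactly $\beta_{p,p+j}(R'/I')$, whereas each generator divisible by $x_{n}$ has $\max=n$ and contributes $\binom{n-1}{p-1}$. Thus $\beta_{p,p+j}(R/I)=\beta_{p,p+j}(R'/I')+b_{j+1}(I)\binom{n-1}{p-1}$, where $b_{d}(I)$ is the number of minimal generators of $I$ of degree $d$ divisible by $x_{n}$; the same identity holds for $\mathcal{W}_{m}(h)$, whose $x_{n}$-free part is by construction $\mathcal{W}_{m-1}(\Delta h)$ (and $\Lex(\Delta h)$ when $m=1$).

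Next I would show that $b_{d}(I)$ depends only on $h$. For a strongly stable ideal a monomial $M$ divisible by $x_{n}$ lies in $\Gens(I)$ exactly when $M/x_{n}\notin I$, so $b_{d}(I)$ equals the number of monomials $N$ of degree $d-1$ with $N\notin I$ and $x_{n}N\in I$, i.e. $b_{d}(I)=\dim_{K}\ker\bigl(\times x_{n}\colon (R/I)_{d-1}\to (R/I)_{d}\bigr)$. Since $I$ is strongly stable and $R/I$ has the WLP, Lemma~3.3 gives that $x_{n}$ is a weak Lefschetz element, so this multiplication has maximal rank and $b_{d}(I)=\max\{0,\,h_{d-1}-h_{d}\}$; the same formula holds for $\mathcal{W}_{m}(h)$, which is strongly stable and has the WLP as well. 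Subtracting the two decompositions, the inequality to prove collapses to $\beta_{p,p+j}(R'/I')\le\beta_{p,p+j}(R'/\mathcal{W}_{m-1}(\Delta h))$ for all $p,j$. Since $\times x_{n}$ has maximal rank in every degree, the \HF~of $R'/I'$ is $1,\,h_{1}-h_{0},\,\ldots,\,h_{k}-h_{k-1}$, that is $\Delta h$. For $m=1$ this already finishes: by Theorem~2.3~(b), $\beta_{p,p+j}(R'/I')\le\beta_{p,p+j}(R'/\Lex(\Delta h))$, and $\Lex(\Delta h)$ is the $x_{n}$-free part of $\mathcal{W}_{1}(h)$. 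For $m\ge2$ I would apply the inductive hypothesis (Proposition~4.1 for $m-1$) to the algebra $R'/I'$, provided I know that it has $(m-1)$-times the WLP.

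The step I expect to be the main obstacle is exactly this last claim, that $R'/I'$ has $(m-1)$-times the WLP. Here I would argue that, since $R/I$ has $m$-times the WLP, by semicontinuity of ranks (cf.~\cite{co1}) a \emph{general} linear form $\ell$ is a weak Lefschetz element for $R/I$ and $R/(I+(\ell))$ has $(m-1)$-times the WLP; because $I$ is Borel-fixed, a general hyperplane section restricts it in the same way as $x_{n}=0$, so $\Gin(I+(\ell))=\Gin(I)+(x_{n})=I+(x_{n})$, and then Proposition~2.2 yields that $R'/I'=R/(I+(x_{n}))$ has $(m-1)$-times the WLP. Feeding this back into the induction, together with the two preceding paragraphs, proves $\beta_{ij}(R/I)\le\beta_{ij}(R/\mathcal{W}_{m}(h))$ and hence the statement. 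The delicate ingredients are the identity $\Gin(I+(\ell))=I+(x_{n})$ for Borel-fixed $I$ and the passage from the existence of \emph{some} Lefschetz sequence to a general one.
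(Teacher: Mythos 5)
Your proof is correct, but it follows a genuinely different route from the paper's. The paper reduces (as you do) to the strongly stable case via $\Gin$, but then compares the invariants $m_{\le i}$ of the \emph{full graded components} $J_j$ and $(\mathcal{W}_m(h))_j$, invoking \cite[Proposition 3.7]{co2} (Proposition 4.2) to convert that comparison into the Betti number inequality, with Bigatti's theorem \cite[Theorem 2.1]{big} supplying the base case $m=1$. You instead work directly with the Eliahou--Kervaire formula on minimal generators: you split $\Gens(I)$ into the $x_n$-free part, which is $\Gens(I')$ for $I'=(I+(x_n))/(x_n)$ and reproduces $\beta_{p,p+j}(R'/I')$, and the $x_n$-divisible part, whose cardinality in each degree you identify with $\dim\ker(\times x_n)$ and hence with $\max\{0,h_{d-1}-h_d\}$ via Lemma 3.3 --- a quantity depending only on $h$. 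This yields the clean exact decomposition $\beta_{p,p+j}(R/I)=\beta_{p,p+j}(R'/I')+\max\{0,h_j-h_{j+1}\}\binom{n-1}{p-1}$, which is more informative than the paper's inequality-only argument, and reduces the base case to the classical Lex bound (Theorem 2.3(b)) rather than to Bigatti's $m_{\le i}$ comparison. Both arguments share the same induction on $m$ by modding out $x_n$, and both hinge on the same delicate point you correctly flag: that $R/(I+(x_n))$ inherits $(m-1)$-times the WLP when $I$ is strongly stable. The paper leaves this implicit in the proof of Proposition 4.1 and only formalizes it afterwards as Lemma 4.4; your justification via genericity of Lefschetz sequences and $\Gin(I+(\ell))=I+(x_n)$ for Borel-fixed $I$ (i.e.\ \cite[Lemma 2.1]{co1}, the same tool the paper uses for Proposition 2.2) is a legitimate way to fill it.
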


We have seen in Section 2 that for a homogeneous ideal $J \subset R$ taking its generic initial ideal $\Gin(J)$ does not change the \HF, and also that $R/J$ has $m$-times the WLP if and only if $R/\Gin(J)$ has $m$-times the WLP. From Theorem 2.3 we have the following inequality: \[ \beta_{ij}(R/J) \leq \beta_{ij}(R/\Gin(J)) ,~~\forall  i, j \ge 0. \]
So, as $\Gin(J)$ is a strongly stable ideal, it will be enough to prove that  (\ref{eq:beta}) holds for $J$ strongly stable.  \\                                                                                                                                                                                                                                                                                      
%END PAGE 7

First let us establish some notations. For a monomial $M = x_{1}^{a_{1}}\ldots x_{n}^{a_{n}}$ in \k{n} we define: 
\[\max(M) = \max\{ i : a_{i} > 0 \}. \] 

For a set of monomials $A \subset$  \k{n} and for $i = 1 , \dots , n $ we write:
\begin{displaymath}
m_{i}(A) = | \{ M \in A : max(M) = i\}| , \quad
m_{\le i}(A)= | \{ M \in A : max(M) \le i \} |.
\end{displaymath}
When $J$ is either a vector space generated by monomials of the same degree or a monomial ideal, we set \[ m_{i}(J) = m_{i}(G), \qquad m_{\le i}(J) = m_{\le i}(G) ,\] where G is the set of minimal monomial (vector space or ideal) generators of $J$. If $J$ is a monomial ideal we will denote by $J_{i}$ the vector space \mbox{$ \{ M \in J : deg(M) = i \} $.} 

We will need the following result from \cite{co2}: 
\newtheorem{prop1}[maxbetti]{Proposition}
\begin{prop1} Let $ I, J$ be strongly stable ideals with the same \HF. Assume that $m_{\le i}(I_{j}) \le m_{\le i}(J_{j}) ,~~  \forall i , j \ge 0$. Then one has: 
\begin{enumerate}
\item[\emph{1.}] $ m_{i}(J) \le m_{i}(I), \qquad ~~~~ \qquad \forall ~i > 0 .$
\item[\emph{2.}] $\beta_{ij}(R/J) \leq \beta_{ij}(R/I) , \qquad  \forall~ i , j \ge 0 .$
\end{enumerate}
\end{prop1}

We can now prove Proposition 4.1:
\begin{proof}
We saw that we can suppose that $J$ is strongly stable and, as ~$\mathcal{W}_{m}(h)$ is also strongly stable, from proposition 4.2 we have that in order to prove that (\ref{eq:beta}) holds, we only need to prove that:
\begin{equation} \label{eqm}
m_{\le i}( (\mathcal{W}_{m}(h))_{j}) \le m_{\le i}(J_{j}) ,~~  \forall~ i \le n \textup{~and~}\forall ~j \ge 0.
\end{equation}

As $R/J$ and $R/ \mathcal{W}_{m}(h)$ have the same \HF~ it follows immediately that (\ref{eqm}) holds for $i = n$.
 
If $i < n$ it is easy to see that, as $\mathcal{W}_{m-1}(\Delta h) =  (\mathcal{W}_{m}(h) +(x_{n}))/(x_n)$ and  if we denote \mbox{$J_{m-1} = (J + (x_{n}))/(x_n)$,} then we have:
\[ m_{\le i}(( \mathcal{W}_{m}(h))_{j}) = m_{\le i}(( \mathcal{W}_{m-1}(\Delta h))_{j}) ~~ \forall~ i < n~~ \textup{and} \]
\[ m_{\le i}(J_{j}) = m_{\le i}((J_{m-1})_{j}) ~~~\qquad ~\forall~i < n.\] 
So what we have to prove now is that
\begin{equation} \label{eqm2}
m_{\le i}( (\mathcal{W}_{m-1}(\Delta h))_{j}) \le m_{\le i}((J_{m-1})_{j}) ,~~  \forall~ i < n \textup{~and~}\forall ~j \ge 0.
\end{equation}
This means that if (\ref{eqm}) holds for $m-1$, then it also holds for $m$. So, in order to conclude, we only need to look at the case $m = 1$.
%END PAGE 8

If $m = 1$ we have:

1. If $j > k_{1}$ we have by Lemma 3.3 that 
\[ \x{n-1}{j} \subseteq  \mathcal{W}_{1}(h) \quad and \quad \x{n-1}{j} \subseteq J \] 
So, in this case,  we actually have equality in (\ref{eqm2})  $\forall ~i < n$.

2. If $j \le k_{1}$  
By construction $ \mathcal{W}_{0}(\Delta h)$ is  the lex-segment ideal and $J_{0}$ is still a strongly stable ideal (see \cite[Proposition 1.4]{big}).  By Lemma 3.2, $x_{n}$ is a WLE for both $R/J$ and $R/\mathcal{W}_{m}(h)$, and thus we have that the \HF s of $R'/J_{0}$ and $R'/\mathcal{W}_{0}(\Delta h)$ are equal to $\Delta h$.

From the equality of the Hilbert functions  we have   $|( \mathcal{W}_{0}(\Delta h))_{j}| = |(J_{0})_{j}|$ and thus we can apply a result of A.M. Bigatti (see \cite[theorem 2.1]{big}) that ensures that (\ref{eqm}) holds also for $m = 1$.

\end{proof}

\subsection{Other Ideals with Maximal Betti Numbers}

To simplify notation we introduce, for all $i \in 1, \ldots n$ the following morphism: 
$ \rho_i : \k{n} \longrightarrow \k{i}$, 
with: 
\begin{displaymath}
\rho_i(x_j)  = \left.\bigg\{ \begin{array}{ll}
                                   x_j &\textup{if}~ j \le i \\
                                   0    &\textup{if} ~ j > i.\\
                                   \end{array}\right.
\end{displaymath}                                 
Notice that if $I \subset \k{n}$ is a homogeneous ideal, then $\rho_i(I)$  is an ideal of \k{i}. This ideal will have the same generators as the ideal
 \mbox{$I + (x_n, \ldots, x_{n-i+1}) / (x_n, \ldots, x_{n-i+1})$.}\\

In this section we will give a description of the ideals $J$ of $R$ such that $R/J$ has \HF~ $h$,  $m$-times the WLP and maximal Betti numbers within this category. More precisely we will prove the following:
\newtheorem{bettimax}[maxbetti]{Proposition}
\begin{bettimax}
Let $J \subset R$ be an ideal such that $R/J$ has \HF~ $h$ and $m$-times the weak Lefschetz property $(m \in \mathbb{N})$.  The following are equivalent:
\begin{enumerate}
\item[\emph{1.}]
$J$ has maximal Betti numbers  among ideals with the above properties.
\item[\emph{2.}]
$J$ is componentwise linear and the ideal $\rho_{n-m}(\Gin(J))$ is Gotzmann.
\end{enumerate}
\end{bettimax}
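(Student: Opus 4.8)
The plan is to pass to the generic initial ideal and then reduce everything to a count of minimal generators organized by $\max$. Since $\Gin$ preserves the Hilbert function and, by Proposition 2.1, the $m$-times WLP, Proposition 4.1 gives $\beta_{ij}(R/\Gin(J))\le\beta_{ij}(R/\mathcal{W}_{m}(h))$ for all $i,j$, while Theorem 2.3(a) gives $\beta_{ij}(R/J)\le\beta_{ij}(R/\Gin(J))$. As $R/\mathcal{W}_{m}(h)$ itself has Hilbert function $h$ and $m$-times the WLP, condition 1 amounts to $\beta_{ij}(R/J)=\beta_{ij}(R/\mathcal{W}_{m}(h))$ for all $i,j$, and by the two displayed inequalities this is equivalent to having \emph{both} $\beta_{ij}(R/J)=\beta_{ij}(R/\Gin(J))$ and $\beta_{ij}(R/\Gin(J))=\beta_{ij}(R/\mathcal{W}_{m}(h))$. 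By Theorem 2.4 the first equality says exactly that $J$ is componentwise linear. So the proposition will follow once we prove, for a strongly stable ideal $I$ (to be applied with $I=\Gin(J)$, which is strongly stable in characteristic zero) with Hilbert function $h$ and $m$-times the WLP:
\[
\beta_{ij}(R/I)=\beta_{ij}(R/\mathcal{W}_{m}(h))\ \text{ for all }i,j\quad\Longleftrightarrow\quad \rho_{n-m}(I)\ \text{ is Gotzmann.}
\]

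For a strongly stable ideal $I$ write $g_{\ell,d}(I)$ for the number of minimal generators of $I$ of degree $d$ with $\max$ equal to $\ell$, so $m_{\ell}(I)=\sum_{d}g_{\ell,d}(I)$. By the Eliahou--Kervaire formula $\beta_{p,\,p+d-1}(R/I)=\sum_{\ell=1}^{n}g_{\ell,d}(I)\binom{\ell-1}{p-1}$; since the matrix $\bigl(\binom{\ell-1}{p-1}\bigr)_{1\le p,\ell\le n}$ is unitriangular, the graded Betti numbers of $R/I$ and the numbers $g_{\ell,d}(I)$ determine one another. Likewise, inside $\k{\ell}$, the graded Betti numbers of $\k{\ell}/\rho_{\ell}(I)$ and the numbers $\bigl(g_{\ell',d}(I)\bigr)_{\ell'\le\ell}$ determine one another, because $\rho_{\ell}(I)$ is again strongly stable (cf. \cite[Proposition 1.4]{big}) and its minimal generators are the images of those of $I$ with $\max\le\ell$, whence $g_{\ell',d}(\rho_{\ell}(I))=g_{\ell',d}(I)$ for $\ell'\le\ell$. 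Hence the left-hand side of the displayed equivalence is equivalent to $g_{\ell,d}(I)=g_{\ell,d}(\mathcal{W}_{m}(h))$ for all $\ell$ and $d$, and we treat separately the ranges $\ell>n-m$ and $\ell\le n-m$.

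The core point is that the generators with large $\max$ are forced by $h$ alone. By Lemma 3.3 ($x_{n}$ is a weak Lefschetz element of the quotient by a strongly stable ideal with the WLP) and the strongly stable analogue of the characterization of the $m$-times WLP --- obtained by iterating Lemma 3.3, which is also what legitimizes the induction in the proof of Proposition 4.1 --- the ideals $\rho_{n-i+1}(I)\subset\k{n-i+1}$ are strongly stable, $x_{n-i+1}$ is a weak Lefschetz element of $\k{n-i+1}/\rho_{n-i+1}(I)$, and this algebra has Hilbert function $\Delta^{i-1}h$, for $i=1,\dots,m$. Next, for any strongly stable ideal, such as $\rho_{\ell}(I)$, one has $(\rho_{\ell}(I):x_{\ell})=(\rho_{\ell}(I):(x_{1},\dots,x_{\ell}))$, because $x_{\ell}N\in\rho_{\ell}(I)$ forces $x_{i}N=(x_{i}/x_{\ell})(x_{\ell}N)\in\rho_{\ell}(I)$ for all $i<\ell$; consequently $\operatorname{socle}(\k{\ell}/\rho_{\ell}(I))=(\rho_{\ell}(I):x_{\ell})/\rho_{\ell}(I)$, every monomial of $(\rho_{\ell}(I):x_{\ell})\setminus\rho_{\ell}(I)$ is a minimal generator of $(\rho_{\ell}(I):x_{\ell})$, and $M\mapsto M/x_{\ell}$ is a degree-preserving bijection from the minimal generators of $\rho_{\ell}(I)$ with $\max=\ell$ onto those monomials; therefore
\[
g_{\ell,d}(I)=g_{\ell,d}(\rho_{\ell}(I))=\dim_{K}\bigl(0:_{\k{\ell}/\rho_{\ell}(I)}x_{\ell}\bigr)_{d-1}.
\]
For $\ell\in\{n-m+1,\dots,n\}$ the algebra $\k{\ell}/\rho_{\ell}(I)$ has Hilbert function $\Delta^{n-\ell}h$ and $x_{\ell}$ is a weak Lefschetz element of it, so this last dimension equals $\max\bigl(0,(\Delta^{n-\ell}h)_{d-1}-(\Delta^{n-\ell}h)_{d}\bigr)$, which depends only on $h$; hence $g_{\ell,d}(I)=g_{\ell,d}(\mathcal{W}_{m}(h))$ for all $\ell>n-m$ and all $d$.

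For $\ell\le n-m$ we use the construction: $\rho_{n-1}(\mathcal{W}_{m}(h))=\mathcal{W}_{m-1}(\Delta h)$, and iterating, $\rho_{n-m}(\mathcal{W}_{m}(h))=\mathcal{W}_{0}(\Delta^{m}h)=\Lex(\Delta^{m}h)$; moreover (one more application of $\Delta$, as above) $\k{n-m}/\rho_{n-m}(I)$ has Hilbert function $\Delta^{m}h$, so $\Lex(\rho_{n-m}(I))=\Lex(\Delta^{m}h)=\rho_{n-m}(\mathcal{W}_{m}(h))$. Using $g_{\ell,d}(I)=g_{\ell,d}(\rho_{n-m}(I))$ for $\ell\le n-m$ and the Eliahou--Kervaire bijection inside $\k{n-m}$, the equalities ``$g_{\ell,d}(I)=g_{\ell,d}(\mathcal{W}_{m}(h))$ for all $\ell\le n-m$'' become $\beta_{ij}(\k{n-m}/\rho_{n-m}(I))=\beta_{ij}(\k{n-m}/\Lex(\rho_{n-m}(I)))$ for all $i,j$, which by Theorem 2.5 means precisely that $\rho_{n-m}(I)$ is Gotzmann. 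Together with the previous paragraph this proves the displayed equivalence, hence the proposition. The step I expect to be the main obstacle is the third paragraph, namely pinning down $g_{\ell,d}(I)$ for $\ell>n-m$ from $h$ alone: this hinges on the strongly stable $m$-times WLP being detected by $x_{n},\dots,x_{n-m+1}$ being successive weak Lefschetz elements (so that $\k{\ell}/\rho_{\ell}(I)$ has the expected Hilbert function $\Delta^{n-\ell}h$ with $x_{\ell}$ Lefschetz on it), whereas the remaining ingredients --- the Eliahou--Kervaire formula, the socle identity for strongly stable ideals, and the behaviour of minimal generators and of $\mathcal{W}_{\bullet}$ under the projections $\rho_{\ell}$ --- are routine.
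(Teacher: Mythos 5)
Your proof is correct, and while it follows the same overall skeleton as the paper's (reduce to $\Gin(J)$ via Theorem 2.3 and Theorem 2.4 so that condition 1 becomes componentwise linearity plus $\beta_{ij}(R/\Gin(J))=\beta_{ij}(R/\mathcal{W}_{m}(h))$; split the generator data at level $n-m$; recover the Gotzmann condition from Theorem 2.5 applied in $\k{n-m}$, using that $\Lex(\rho_{n-m}(I))=\Lex(\Delta^{m}h)=\rho_{n-m}(\mathcal{W}_{m}(h))$), the combinatorial engine is genuinely different. The paper runs everything through the numbers $m_{\le i}(I_{j})$ of the graded \emph{components} and invokes \cite[Proposition 3.7]{co2} twice — once in $R$ and once in $\k{n-m}$ — which requires as a hypothesis the one-sided inequality $m_{\le i}(\mathcal{W}_{m}(h)_{j})\le m_{\le i}(J_{j})$ established in the proof of Proposition 4.1; the range $i\ge n-m$ is then handled for free, since $m_{\le n-t}(I_{j})=\dim(\rho_{n-t}(I))_{j}$ is read off from the Hilbert function $\Delta^{t}h$. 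You instead work with \emph{generator} counts $g_{\ell,d}$, observe that the Eliahou--Kervaire matrix $\bigl(\binom{\ell-1}{p-1}\bigr)$ is unitriangular so that graded Betti numbers and the $g_{\ell,d}$ determine each other unconditionally, and then must do extra work in the range $\ell>n-m$: the colon/socle bijection $M\mapsto M/x_{\ell}$ together with $x_{\ell}$ being a weak Lefschetz element of $\k{\ell}/\rho_{\ell}(I)$ (Lemma 4.4) shows $g_{\ell,d}(I)=\max\bigl(0,(\Delta^{n-\ell}h)_{d-1}-(\Delta^{n-\ell}h)_{d}\bigr)$ is forced by $h$. I checked the two points that carry the weight — that $x_{\ell}N$ is a minimal generator for any monomial $N\in(\rho_{\ell}(I):x_{\ell})\setminus\rho_{\ell}(I)$ (strong stability is used exactly where you use it), and that $\Gens(\rho_{n-m}(I))$ consists precisely of the generators of $I$ with $\max\le n-m$ — and both are sound. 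What each approach buys: yours is self-contained modulo Eliahou--Kervaire and avoids the cited black box and its inequality hypothesis; the paper's is shorter given the machinery of \cite{co2} and does not need the socle argument.
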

 
We have already seen in Proposition 4.1 that $\mathcal{W}_{m}(h)$ has maximal Betti numbers. Let us 
fix $J \subset R$ as in the hypothesis of Proposition 4.3. From Theorem 2.3 and Proposition 4.1  we get:   
\begin{displaymath}
\beta_{ij}(R/J) \le \beta_{ij}(R/\Gin(J)) \le \beta_{ij}(R/\mathcal{W}_{m}(h))
\end{displaymath}

This means that if $R/J$ has maximal Betti numbers among algebras with $m$-times the WLP and \HF~ $h$, then \mbox{$\beta_{ij}(R/J) = \beta_{ij}(R/\Gin(J))$.}
In other words, $J$ must be componentwise linear (by Theorem 2.4.).

Knowing this, we will now concentrate on the properties of $\Gin(J)$. Replacing $J$ with $\Gin(J)$ we may assume that $J$ is strongly stable.

For a homogeneous ideal $J \subseteq R$  and $i \in \mathbb{N}$ we will denote by $J_{\le i}$  the ideal generated by the elements of $J$ with degree $\le i$. If $J$ is monomial, then $J_{\le i}$ will also be monomial.
%END PAGE 9

We already know from Lemma 3.2 that  for a strongly stable ideal $J$, $R/J$ has the WLP if and only if $x_{n}$ is a WLE for $R/J$. An easy generalization of this fact is the following:
\newtheorem{lema4}[maxbetti]{Lemma}
\begin{lema4}
Let $J \subseteq R$ be a strongly stable ideal. Then\\
 $R/J$ has \mbox{$m$-times} the WLP  $\Longleftrightarrow$  $x_{n-i}$ is a WLE for $R/J + (x_n, \ldots, x_{n-i+1})$ for all \mbox{$i = 0,\ldots, m-1$.}
\end{lema4}
\begin{proof} When  $m=1$ the result is just the one of Lemma 3.2.\\
If $m>1$ then still we know that $x_{n}$ is a WLE for $R/J$. But  $R/J +(x_{n})$ has $(m-1)$-times the WLP and $J + (x_n)/(x_{n})$ will be still strongly stable so we can apply induction.
\end{proof}

We will prove the following result which, together with the above observations, proves Proposition 4.3.

\newtheorem{prop2}[maxbetti]{Proposition}
\begin{prop2}
Let $J \subset R$ be a strongly stable ideal  such that $R/J$ has $m$-times the WLP and \HF~$h$. Then
\begin{center}
$\beta_{ij}(R/J) = \beta_{ij}(R/\mathcal{W}_{m}(h)), \forall~i,j \Longleftrightarrow \rho_{n-m}(J_{\le k_{m}})$ is Gotzmann.
\end{center}
\end{prop2}

\begin{proof}
 The ideals $J$ and $\mathcal{W}_{m}(h)$ are strongly stable with the same \HF . We know that $m_{\le i}(\mathcal{W}_{m}(h)_{j}) \le m_{\le i}(J_{j})~~ \forall~ i,j$ from the proof of Proposition 4.1. From \cite[Proposition 3.7]{co2} we know that the following are equivalent:
\begin{equation}\label{betti}
 \beta_{ij}(R/J) = \beta_{ij}(R/\mathcal{W}_{m}(h)),~\forall~ i,j.
\end{equation}
\begin{equation}\label{metti}
 m_{\le i}(J_{j}) = m_{\le i}((\mathcal{W}_{m}(h))_{j}),~\forall~ i,j.
\end{equation}
 
 \fbox{$\Rightarrow$} So we have  $m_{\le i}(J_{j}) = m_{\le i}((\mathcal{W}_{m}(h))_{j}),~\forall~ i,j$, but this means also that 
\begin{displaymath}
 m_{\le i }((\rho_{n-m}(J))_{j}) = m_{\le i}((\rho_{n-m}(\mathcal{W}_{m}(h)))_{j})  ~~ \forall~ i,j.
 \end{displaymath}
 
By construction $\rho_{n-m}(\mathcal{W}_{m}(h)) = \mathcal{W}_{0}(\Delta^{m}h) = \Lex(\Delta^{m}h)$, which  is a strongly stable ideal. Also $\rho_{n-m}(J)$  is a strongly stable ideal because its generators are just the generators of $J$ in the first $n-m$ variables.
  
 By Lemma 4.4 we have that $x_{n-i}$ is a WLE for  $R/\rho_{n-i}(\mathcal{W}_{m}(h))$ and for $R/\rho_{n-i}(J)$, $\forall ~i = 0, \ldots, m-1$. Thus we get that both $R/\rho_{n-m}(\mathcal{W}_{m}(h))$ and $R/\rho_{n-m}(J)$ have the same \HF, $\Delta^{m} h$.
 
 So we can apply  \cite[Proposition 3.7]{co2} and obtain that 
 \[ \beta_{i,j}(\rho_{n-m}(J)) = \beta_{i,j}(\Lex(\Delta^{m}h)),\]
  which means by Theorem 2.4. that $\rho_{n-m}(J)$ is a Gotzmann ideal.
    
 \fbox{$\Leftarrow$} We will show that (\ref{metti}) holds.
 
1. If $i = n-t \ge n-m$ then (\ref{metti}) holds from the equality of the \HF s of $R/\rho_{n-t}(J)$ and $R/\rho_{n-t}(\mathcal{W}_{m}(h))$.
%END PAGE 10

2. If $i < n-m$,  we have 
\begin{displaymath} 
m_{\le i}(J_{j}) =  m_{\le i}((\rho_{n-m}(J))_{j}),
\end{displaymath}
\begin{displaymath} 
m_{\le i}((\mathcal{W}_{m}(h))_{j}) = m_{\le i}((\rho_{n-m}(\mathcal{W}_{m}(h))_{j})
\end{displaymath} 
So we only need to prove (\ref{metti}) for  $\rho_{n-m}(J)$ and \mbox{$\rho_{n-m}(\mathcal{W}_{m}(h)) = \Lex(\Delta^{m}h)$.} 
In this case (\ref{metti}) holds because $\rho_{n-m}(J)$ is a Gotzmann ideal, which is equivalent by Theorem 2.5 to the equality of its Betti numbers with the Betti numbers of the lex-segment ideal. This is again equivalent by \cite[Proposition 3.7]{co2} to 
\begin{displaymath} 
m_{\le i}((\rho_{n-m}(J))_{j}) = m_{\le i}((\Lex(\Delta^{m}h))_{j}).
\end{displaymath} 

% For ($i < n-m$ and $ j \ge k_{m}$) or ($i =n-t $ and $j \ge k_{t}$, with $t \in 1,\ldots,m$) we have that (\ref{metti}) holds because $\x{i}{j} \subset J$ and $\x{i}{j} \subset \mathcal{W}_{m}(h)$
 
%For $i = n-t$ with $t \in 1,\ldots,m$ and $j< k_{m}$ (\ref{metti}) holds because:
%\begin{displaymath} 
%m_{\le n-t}(J_{j}) =  m_{\le n-t}((J/(x_{n},\ldots,x_{n-t+1})_{j}),
%\end{displaymath}
%\begin{displaymath} 
%m_{\le n-t}((\mathcal{W}_{m}(h))_{j}) = m_{\le n-t}((\mathcal{W}_{m}(h)/(x_{n},\ldots,x_{n-t+1}))_{j})~~and
%\end{displaymath} 
%\begin{displaymath} 
%m_{\le n-t}((J/(x_{n},\ldots,x_{n-t+1}))_{j}) = m_{\le n-t}((\mathcal{W}_{m}(h)/(x_{n},\ldots,x_{n-t+1}))_{j})
%\end{displaymath} 
%where the last equality comes from the equality of the Hilbert functions of $(R/J)/(x_{n},\ldots,x_{n-t+1})$ and $(R/\mathcal{W}_{m}(h))/(x_{n},\ldots,x_{n-t+1})$.

%For $i=n$ and $\forall~ j$ (\ref{metti}) holds because of the equality of the Hilbert functions of $R/J$ and $R/\mathcal{W}_{m}(h)$.

%For $i < n-m$ and $j < k_{m}$, (\ref{metti}) holds because:\\
%$m_{\le i}(J_{j}) =  m_{\le i}((J/(x_{n},\ldots,x_{n-m+1}))_{j}$ and\\
% $ m_{\le i}((\mathcal{W}_{m}(h))_{j}) = m_{\le i}((\mathcal{W}_{m}(h)/(x_{n},\ldots,x_{n-m+1}))_{j}) = m_{\le i}((\Lex(\Delta^{m}h)_{j})$ and $J/(x_{n},\ldots,x_{n-m+1})$ is Gotzmann. This means from Theorem 1.7. that $J/(x_{n},\ldots,x_{n-m+1})$ has the same Betti numbers as $(\Lex\Delta^{m}h$ and by [Conca ...] (\ref{metti}) follows.
 \end{proof} 
\subsection{Rigid Resolutions}

%The upper bounds for the Betti numbers of a homogeneous ideal given by its generic initial ideal and by the lex-segment ideal (see Theorem 2.3) have been shown to have the following property: 
For a homogenous ideal $I$ it has been shown in \cite{co3} that 
if $\beta_{q}(I) = \beta_{q}(\Gin(I))$ then $\beta_i(I) = \beta_i(\Gin(I))$  for all $i \ge q$. This property is called rigidity and it also holds if $\Gin(I)$ is replaced by $\Lex(I)$ or any generic initial ideal of $I$.

In this section we will prove that  algebras with $m$-times the WLP have a similar property:  if one of the  Betti numbers reaches the upper bound given by the Betti numbers of $R/\mathcal{W}_{m}(h))$, then all the following Betti numbers reach it as well. More precisely we will prove that:

\newtheorem{rigid}[maxbetti]{Proposition}
\begin{rigid}
Let $I \subset R$ be a homogeneous ideal such that $R/I$ has $m$-times the WLP and \HF~$h$. If $\beta_{q}(R/I) = \beta_{q}(R/\mathcal{W}_{m}(h))$ for some $q$ then $\beta_{i}(R/I) = \beta_{i}(R/\mathcal{W}_{m}(h))$ for all $i  \ge q$.
\end{rigid}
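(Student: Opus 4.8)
The plan is to reduce to the strongly stable case using the rigidity of the generic initial ideal quoted from \cite{co3} at the beginning of this section, and then to read the rigidity off the Eliahou--Kervaire formula. We may assume $q\ge 1$. First, combining Theorem 2.3 with Proposition 4.1 applied to $R/\Gin(I)$ (which has $m$-times the WLP by Proposition 2.2 and \HF~$h$), we obtain
\[ \beta_{q}(R/I) \le \beta_{q}(R/\Gin(I)) \le \beta_{q}(R/\mathcal{W}_{m}(h)). \]
Since the two extreme terms coincide by hypothesis, all the inequalities are equalities; in particular $\beta_{q}(R/I)=\beta_{q}(R/\Gin(I))$ and $\beta_{q}(R/\Gin(I))=\beta_{q}(R/\mathcal{W}_{m}(h))$. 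The rigidity theorem of \cite{co3} then gives $\beta_{i}(R/I)=\beta_{i}(R/\Gin(I))$ for all $i\ge q$, so it is enough to prove the statement with $I$ replaced by the strongly stable ideal $J:=\Gin(I)$; the original assertion follows by composing the two strings of equalities.

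So assume from now on that $J$ is strongly stable, that $R/J$ has $m$-times the WLP and \HF~$h$, and that $\beta_{q}(R/J)=\beta_{q}(R/\mathcal{W}_{m}(h))$. Both $J$ and $\mathcal{W}_{m}(h)$ are strongly stable with the same \HF~$h$, and in the proof of Proposition 4.1 it was shown that $m_{\le i}((\mathcal{W}_{m}(h))_{j}) \le m_{\le i}(J_{j})$ for all $i,j$. By part~1 of Proposition 4.2 this implies $m_{\ell}(J) \le m_{\ell}(\mathcal{W}_{m}(h))$ for every $\ell$, so the numbers $a_{\ell}:=m_{\ell}(\mathcal{W}_{m}(h))-m_{\ell}(J)$ are all nonnegative. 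By the well-known Eliahou--Kervaire formula for the graded Betti numbers of a strongly stable ideal, the total Betti numbers satisfy
\[ \beta_{i}(R/J)=\sum_{\ell\ge 1}\binom{\ell-1}{i-1}m_{\ell}(J)\qquad\text{and}\qquad \beta_{i}(R/\mathcal{W}_{m}(h))=\sum_{\ell\ge 1}\binom{\ell-1}{i-1}m_{\ell}(\mathcal{W}_{m}(h)) \]
for every $i\ge 1$, hence
\[ \beta_{i}(R/\mathcal{W}_{m}(h))-\beta_{i}(R/J)=\sum_{\ell\ge 1}\binom{\ell-1}{i-1}a_{\ell}. \]

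Taking $i=q$ here and using that $a_{\ell}\ge 0$ while $\binom{\ell-1}{q-1}>0$ for all $\ell\ge q$, the equality $\beta_{q}(R/J)=\beta_{q}(R/\mathcal{W}_{m}(h))$ forces $a_{\ell}=0$ for every $\ell\ge q$. Then for any $i\ge q$ only indices $\ell\ge i\ge q$ contribute a nonzero binomial coefficient in the last displayed sum, and for those $\ell$ we have $a_{\ell}=0$; therefore $\beta_{i}(R/\mathcal{W}_{m}(h))=\beta_{i}(R/J)$ for all $i\ge q$, which is what we needed.

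I do not expect a genuinely new difficulty here: once one is in the strongly stable setting, the nonnegative differences $a_{\ell}$ are coupled to the Betti numbers through a triangular system of binomial coefficients, which makes the rigidity almost automatic. The point that requires the most care is the reduction step --- checking that the hypotheses of Propositions 4.1 and 4.2 (equal \HF, the inequality on the $m_{\le i}$) are exactly those established in the proof of Proposition 4.1, and that the rigidity of \cite{co3} correctly transports the conclusion from $\Gin(I)$ back to $I$ over the whole range $i\ge q$.
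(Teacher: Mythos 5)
Your proposal is correct and follows essentially the same route as the paper: reduce to the strongly stable case via the rigidity result of \cite{co3} applied to $\Gin(I)$, then combine the inequality $m_{\ell}(J)\le m_{\ell}(\mathcal{W}_{m}(h))$ (from Propositions 4.1 and 4.2) with the Eliahou--Kervaire formula to force the differences to vanish for $\ell\ge q$. Your explicit introduction of the nonnegative quantities $a_{\ell}$ and the observation about which binomial coefficients are positive merely makes the paper's final step more transparent.
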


\begin{proof}
We have already seen that we have the following inequalities:
\begin{displaymath}
\beta_{i}(R/I) \le \beta_{i}(R/\Gin(I)) \le \beta_{i}(R/\mathcal{W}_{m}(h)).
\end{displaymath}
If for some $q$ equality takes place, we have from \cite[Corollary 2.4]{co3} the following: $\beta_{i}(R/I) = \beta_{i}(R/\Gin(I))$ for all $i \ge q$. So we just need to prove that the proposition holds for \Gin(I), i.e we can assume that $I$ is a strongly stable ideal. 

From the Eliahou-Kervaire formula for the Betti numbers of stable ideals (see for example \cite{big}) we have that:
\begin{equation}\label{elik}
 \beta_{i}(R/I) = \sum_{s=i}^n m_{s}(I)\binom{s-1}{i-1} 
\end{equation}
In the proof of Proposition 4.1 we have shown that the inequality (\ref{eqm}) takes place, so from Proposition 4.2 we have that:
\begin{equation}\label{mi}
m_{i}(I) \le m_{i}(\mathcal{W}_{m}(h)), \qquad \forall ~i > 0 .
\end{equation}
So by (\ref{elik}) and (\ref{mi}) we have that  $\beta_{q}(R/I) = \beta_{q}(R/\mathcal{W}_{m}(h))$  also implies the following equality:
\[ m_{i}(I) = m_{i}(\mathcal{W}_{m}(h)), \forall ~i \ge q.\] 
So, again by (\ref{elik}), we get that  $\beta_{i}(R/I) = \beta_{i}(R/\mathcal{W}_{m}(h))$ for all $i  \ge q$.
\end{proof}

\newtheorem{corrigid}[maxbetti]{Corollary}
\begin{corrigid}
Let $I \subset R$ be a homogeneous ideal such that the graded algebra $R/I$ has $m$-times the WLP and \HF~$h$. 

If $\beta_{q}(R/I) = \beta_{q}(R/\mathcal{W}_{m}(h))$ for some $q$ then:
\[\beta_{ij}(R/I) = \beta_{ij}(R/\mathcal{W}_{m}(h)) \qquad \forall~i\ge q ,~ \forall~ j.\]
\end{corrigid}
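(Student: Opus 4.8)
The plan is to deduce the graded statement from the ungraded rigidity of Proposition 4.6 by a ``termwise domination plus equal sums'' argument, so there is almost nothing new to do. The two ingredients I would invoke are: (i) Proposition 4.6 itself, which under the present hypotheses ($R/I$ has $m$-times the WLP and \HF~$h$, and $\beta_q(R/I) = \beta_q(R/\mathcal{W}_m(h))$ for some $q$) gives the total Betti equality $\beta_i(R/I) = \beta_i(R/\mathcal{W}_m(h))$ for every $i \ge q$; and (ii) the graded upper bound of Proposition 4.1 applied with $J = I$, namely $\beta_{ij}(R/I) \le \beta_{ij}(R/\mathcal{W}_m(h))$ for all $i,j \ge 0$, which is legitimate precisely because $R/I$ is admissible in Proposition 4.1.

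First I would fix a homological degree $i \ge q$. By ingredient (ii), $\beta_{ij}(R/I) \le \beta_{ij}(R/\mathcal{W}_m(h))$ for every internal degree $j$, and there are only finitely many nonzero terms. Summing over $j$ gives
\[
\beta_i(R/I) = \sum_j \beta_{ij}(R/I) \le \sum_j \beta_{ij}(R/\mathcal{W}_m(h)) = \beta_i(R/\mathcal{W}_m(h)).
\]
By ingredient (i) the two outer quantities are equal for $i \ge q$, so the middle inequality is in fact an equality of two finite sums of nonnegative integers whose corresponding summands already satisfy $\beta_{ij}(R/I) \le \beta_{ij}(R/\mathcal{W}_m(h))$. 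Therefore each summand must be equal, i.e.\ $\beta_{ij}(R/I) = \beta_{ij}(R/\mathcal{W}_m(h))$ for all $j$. Since $i \ge q$ was arbitrary, the corollary follows. Equivalently, one may observe that $\beta_{ij}(R/\mathcal{W}_m(h)) - \beta_{ij}(R/I)$ is a nonnegative integer for all $i,j$ and has zero sum over $j$ whenever $i \ge q$, hence vanishes identically there.

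There is essentially no genuine obstacle: the substantive work was already carried out in Proposition 4.6 (via the Eliahou--Kervaire formula and the inequalities $m_i(I) \le m_i(\mathcal{W}_m(h))$) and in Proposition 4.1. The only points to handle with care are that one must use the \emph{graded} form of the upper bound from Proposition 4.1 rather than merely its total consequence, and that finiteness of the resolution makes the termwise comparison of the sums valid. Accordingly I expect the proof to be only a few lines long.
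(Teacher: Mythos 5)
Your argument is exactly the paper's: it combines the graded upper bound of Proposition 4.1 with the total Betti equality from Proposition 4.6 and concludes termwise equality from the equality of finite sums of termwise-dominated nonnegative integers. Correct and essentially identical to the published proof.
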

\begin{proof}
By proposition 4.1 we have $\beta_{ij}(R/I) \le \beta_{ij}(R/\mathcal{W}_{m}(h)) ~\forall~i, j,$ and as $\beta_i (R/I)= \sum_j \beta_{ij} (R/I)$, Proposition 4.6 implies the desired equality.
\end{proof}

\section{Ideal of points}

In this section we will construct, starting from $\mathcal{W}_{m}(h)$ and using a distraction matrix, another ideal $I$ (with the same \HF~and Betti numbers)  such that $R/I$ still has $m$-times the WLP and $I_{\le k_{1}}$ is the ideal of finite set of rational points in $\mathbb{P}_{K}^{n-1}$. \\

First let us recall some notions and results that we need. The results on distractions that we will present here were proven by Bigatti, Conca and Robbiano in  \cite{distract}.
\newtheorem{distrmatr}{Definition}[section]
\begin{distrmatr} 
Let $\mathcal{L} = (L_{ij}~|~i = 1,\ldots,n~ ,~ j~ \in~ \mathbb{N})$ be an infinite matrix with entries $L_{ij} \in R_{1}$ with the following properties:
\begin{enumerate}
\item[\textup{1.}] $\{L_{1j_{1}},\ldots, L_{nj_{n}}\}$ generates $R_{1}$ for every $j_{1},\ldots,j_{n} \in \mathbb{N}$.
\item[\textup{2.}] There exists an integer $N \in \mathbb{N}$ such that $L_{ij} = L_{iN}$ for every $j > N$.
\end{enumerate}
We call $\mathcal{L}$ an \emph{$N$-distraction matrix} or simply a distraction matrix.
\end{distrmatr}

\newtheorem{distr}[distrmatr]{Definition}
\begin{distr}
Let $\mathcal{L}$ be a distraction matrix, and $M = x_{1}^{a_{1}}x_{2}^{a_{2}}\ldots x_{n}^{a_{n}}$ a monomial in $R$. Then the polynomial 
$D_{\mathcal{L}}(M) = \prod_{i=1}^{n}(\prod_{j=1}^{a_{j}}L_{ij})$
is called the \emph{$\mathcal{L}$-distraction} of $M$.
\end{distr}
Having defined $D_{\mathcal{L}}(M)$ for every monomial, $D_{\mathcal{L}}$ extends to a $K$-linear map. Therefore we can consider $D_{\mathcal{L}}(V)$ where $V$ is a subvector space of $R$, and call it the \emph{$\mathcal{L}$-distraction} of $V$. 

The ideal that we will construct will be $D_{\mathcal{L}}( \mathcal{W}_{m}(h))$ for some distraction matrix $\mathcal{L}$ with some extra properties. When $I$ is a homogeneous ideal of $R$ , $D_{\mathcal{L}}(I)$ will coincide with $\bigoplus_{d}D_{\mathcal{L}}(I_{d})$, which is in general just  a vector space, not an ideal. However, when $I$ is a monomial ideal we have the following result:
%END PAGE 11

\newtheorem{distrideal}[distrmatr]{Proposition}
\begin{distrideal}
Let $\mathcal{L}$ be a distraction matrix, and $I\subset R$ a monomial ideal.
\begin{enumerate}
\item[\emph{1.}] The vector space $D_{\mathcal{L}}(I)$ is a homogeneous ideal in $R$.
\item[\emph{2.}] If $M_{1},\ldots,M_{r}$ are monomials in $R$ such that $I = (M_{1},\ldots,M_{r})$, then we have the following: $D_{\mathcal{L}}(I)=(D_{\mathcal{L}}(M_{1}),\ldots,D_{\mathcal{L}}(M_{r}))$.
\item[\emph{3.}]  $h(R/I) = h(R/D_{\mathcal{L}}(I))$.
\item[\emph{4.}]  $\beta_{ij}(R/I) = \beta_{ij}(R/D_{\mathcal{L}}(I))\quad\forall~i,j.$
\end{enumerate}
\end{distrideal}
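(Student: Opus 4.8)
The plan is to reduce everything to a single algebraic identity together with one genuinely nontrivial fact about distractions. \emph{Parts 1 and 2.} Since $D_{\mathcal{L}}$ is $K$-linear and takes a monomial of degree $d$ to a form of degree $d$, the space $D_{\mathcal{L}}(I)=\bigoplus_d D_{\mathcal{L}}(I_d)$ is automatically a graded $K$-subspace of $R$. The only computation needed is the identity
\[
D_{\mathcal{L}}(x_iM)\;=\;L_{i,\,a_i+1}\cdot D_{\mathcal{L}}(M),\qquad a_i:=\deg_{x_i}(M),
\]
which is immediate from the product defining $D_{\mathcal{L}}$, together with property (1) of Definition~5.1, which says that $\{L_{1,a_1+1},\dots,L_{n,a_n+1}\}$ is a $K$-basis of $R_1$ for every exponent vector. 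Put $J=(D_{\mathcal{L}}(M_1),\dots,D_{\mathcal{L}}(M_r))$, a genuine ideal. For $D_{\mathcal{L}}(I)\subseteq J$: a spanning set of $D_{\mathcal{L}}(I)$ consists of the forms $D_{\mathcal{L}}(NM_t)$ over monomials $N$, and iterating the identity shows $D_{\mathcal{L}}(NM_t)\in(D_{\mathcal{L}}(M_t))\subseteq J$ by induction on $\deg N$. For $J\subseteq D_{\mathcal{L}}(I)$: writing $x_k=\sum_i\lambda_iL_{i,a_i+1}$ via the basis property (with the $a_i$ the exponents of a monomial $M\in I$) gives $x_kD_{\mathcal{L}}(M)=\sum_i\lambda_iD_{\mathcal{L}}(x_iM)\in D_{\mathcal{L}}(I)$, since each $x_iM\in I$; an induction on degree then puts every $x^{\alpha}D_{\mathcal{L}}(M_t)$ into $D_{\mathcal{L}}(I)$. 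Hence $D_{\mathcal{L}}(I)=J$ is an ideal, which proves 1.\ and 2.\ simultaneously.

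\emph{The core lemma.} Parts 3 and 4 both rest on the assertion that for every $d$ the $K$-linear map $D_{\mathcal{L}}\colon R_d\to R_d$ is bijective, equivalently that the forms $\{D_{\mathcal{L}}(M):\deg M=d\}$ are $K$-linearly independent. When a term order can be chosen with $\operatorname{in}(L_{ij})=x_i$ for all $i,j$ this is immediate, because then $\operatorname{in}(D_{\mathcal{L}}(M))=M$ and distinct leading terms force independence. The main obstacle is that for a general distraction matrix no single term order need align all of these leading terms --- already a coordinate permutation arises as a distraction. I would handle the general case by induction on the number of variables, decomposing an element of $R_d$ according to its $x_n$-degree and using the identity above to peel off the factors $L_{n,1},\dots$ while invoking the inductive hypothesis on the remaining variables and the fact that each $L_{n,j}$ has nonzero $x_n$-coefficient (again by property (1)); this is precisely the argument of \cite{distract}, and it is the one place where the full strength of the definition of a distraction matrix is used. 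Granting the lemma, part 3 follows at once, since $\dim_K D_{\mathcal{L}}(I)_d=\dim_K D_{\mathcal{L}}(I_d)=\dim_K I_d$; in particular $R/D_{\mathcal{L}}(I)$ is again Artinian.

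\emph{Part 4.} I would deduce the equality of Betti numbers from the known case of polarization. Let $M_t^{\mathrm{pol}}\in S=K[x_{ij}:1\le i\le n,\ 1\le j\le N]$ be the polarization of $M_t$ (with $N$ as in Definition~5.1) and set $I^{\mathrm{pol}}=(M_1^{\mathrm{pol}},\dots,M_r^{\mathrm{pol}})$. The substitution homomorphism $\psi\colon S\to R$, $x_{ij}\mapsto L_{ij}$, is surjective (by property (1)), sends $M_t^{\mathrm{pol}}$ to $D_{\mathcal{L}}(M_t)$, hence maps $I^{\mathrm{pol}}$ onto $D_{\mathcal{L}}(I)$ by part 2, and its kernel is generated by $\dim S-n$ linear forms. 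Now $S/I^{\mathrm{pol}}$ is Cohen--Macaulay --- polarization preserves this property and $R/I$ is Artinian --- and its Krull dimension is exactly $\dim S-n$; moreover $S/(I^{\mathrm{pol}}+\ker\psi)\cong R/D_{\mathcal{L}}(I)$ is Artinian by part 3. Therefore the linear generators of $\ker\psi$ form a system of parameters, hence a regular sequence, on the Cohen--Macaulay ring $S/I^{\mathrm{pol}}$; and passing to the quotient of a graded module by a regular sequence of homogeneous forms leaves all graded Betti numbers unchanged (the minimal free resolution stays acyclic and minimal after tensoring with the quotient ring). Combining this with the standard fact that $\beta_{ij}^S(S/I^{\mathrm{pol}})=\beta_{ij}^R(R/I)$ yields $\beta_{ij}(R/D_{\mathcal{L}}(I))=\beta_{ij}(R/I)$ for all $i,j$. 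Equivalently, one may distract a minimal graded free resolution of $R/I$ directly into one of $R/D_{\mathcal{L}}(I)$ with the same ranks, which is the route taken in \cite{distract}.
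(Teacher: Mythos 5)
The paper does not prove this proposition at all: it is quoted from Bigatti--Conca--Robbiano \cite{distract} (``The results on distractions that we will present here were proven by Bigatti, Conca and Robbiano''), so you have supplied content where the paper only cites. Your treatment of parts 1--3 is essentially the standard argument and is correct in outline: the identity $D_{\mathcal{L}}(x_iM)=L_{i,a_i+1}\cdot D_{\mathcal{L}}(M)$ together with the fact that $\{L_{1,a_1+1},\dots,L_{n,a_n+1}\}$ is a basis of $R_1$ gives both inclusions, hence parts 1 and 2. One correction, though: the inductive sketch you offer for the ``core lemma'' rests on the claim that each $L_{n,j}$ has nonzero $x_n$-coefficient ``by property (1)'', and that claim is false --- property (1) permits $L_{1j}=x_2$, $L_{2j}=x_1$, exactly the permutation example you yourself raise. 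Fortunately you do not need that induction at all: applying your own spanning computation for the inclusion $J\subseteq D_{\mathcal{L}}(I)$ to the ideal $I=R$ (or to $\mathfrak{m}^d$) shows that the forms $D_{\mathcal{L}}(M)$ with $\deg M=d$ span $R_d$; a surjective $K$-endomorphism of the finite-dimensional space $R_d$ is bijective, and part 3 follows. So the lemma is true and your part-2 argument already proves it; only the particular justification you wrote down is broken.

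Part 4 has a genuine restriction. Your route to the regular-sequence claim is ``$S/I^{\mathrm{pol}}$ is Cohen--Macaulay because $R/I$ is Artinian, hence a linear system of parameters is a regular sequence.'' But the proposition assumes only that $I$ is a monomial ideal; for a non-Cohen--Macaulay $I$ the polarization $S/I^{\mathrm{pol}}$ is not Cohen--Macaulay and a linear s.o.p.\ need not be a regular sequence, so this step is unavailable in the stated generality. For the one use made of the proposition in this paper ($I=\mathcal{W}_m(h)$, which is Artinian) your argument is complete; for arbitrary monomial ideals one must show directly, as in \cite{distract}, that the kernel of the substitution $x_{ij}\mapsto L_{ij}$ is a regular sequence modulo $I^{\mathrm{pol}}$ for any distraction matrix --- this is where the real work of the general case lies, and it is not a formal consequence of the Hilbert-function equality of part 3. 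In summary: parts 1--3 are correct modulo the easily repaired false claim above, and part 4 is correct only under the additional (here harmless, but unstated in the proposition) hypothesis that $R/I$ is Artinian.
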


So we know now that $R/D_{\mathcal{L}}(\mathcal{W}_{m}(h))$ will still have Hilbert function $h$. But will $R/D_{\mathcal{L}}(\mathcal{W}_{m}(h))$ still have $m$-times the weak Lefschetz property?  The following result \cite[Theorem 4.3]{distract} will lead us to the answer of this question: 
\newtheorem{distrgin}[distrmatr]{Theorem}
\begin{distrgin}
Let $\mathcal{L}$ be a distraction matrix and $I \subset R$ be a strongly stable monomial ideal. Then $\Gin( D_{\mathcal{L}}(I)) = I$.
\end{distrgin}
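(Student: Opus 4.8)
The plan is to prove the theorem $\Gin(D_{\mathcal{L}}(I)) = I$ for a strongly stable monomial ideal $I$ by combining three facts: that $D_{\mathcal{L}}$ preserves the Hilbert function (Proposition 5.3(3)), that $\Gin$ preserves the Hilbert function, and that $\Gin$ of any homogeneous ideal is strongly stable (stated in Section 2). Since $I$ and $\Gin(D_{\mathcal{L}}(I))$ are both strongly stable ideals with the same Hilbert function, it suffices to show that one is contained in the other; equality then follows because a containment of graded ideals with equal Hilbert function forces equality in each degree.

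First I would establish the degree-by-degree inclusion $\Gin(D_{\mathcal{L}}(I))_d \subseteq I_d$, equivalently that the initial space (after a generic change of coordinates) of $D_{\mathcal{L}}(I)_d$ with respect to the rev-lex order lies inside the span of the monomials of $I_d$. The key input is that $D_{\mathcal{L}}$ acts triangularly on monomials with respect to an appropriate term order: for a monomial $M$, the distraction $D_{\mathcal{L}}(M) = \prod_i \prod_j L_{ij}$ is a product of linear forms $L_{ij}$, each lying in $R_1$, and by property 1 of a distraction matrix these forms are ``generic enough'' in the relevant slots. The intended argument is that, after the generic coordinate change used to compute $\Gin$, the rev-lex initial term of $D_{\mathcal{L}}(M)$ is exactly $M$ itself, so the initial space of $D_{\mathcal{L}}(I)_d$ contains $I_d$; since both spaces have the same dimension (equal Hilbert function), they coincide. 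This is essentially the content of \cite[Theorem 4.3]{distract}, so I would cite the relevant lemmas on distractions from that paper rather than reprove the triangularity from scratch.

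The main obstacle is making precise the claim that the rev-lex leading term of $D_{\mathcal{L}}(M)$ is $M$. A distraction matrix need not interact cleanly with the rev-lex order on the nose — one generally needs to first pass to the generic change of coordinates (which is part of the definition of $\Gin$) and observe that $D_{\mathcal{L}}(I)$, being obtained from a monomial ideal by substituting generic-position linear forms, is in sufficiently general position that its rev-lex initial ideal equals $I$. Concretely, one can reduce to the case where $\mathcal{L}$ is the ``generic'' distraction matrix $L_{ij} = \sum_{t\le i} c_{ijt} x_t$ with the $c_{ijt}$ algebraically independent, show the statement there by a direct leading-term computation (the factor $L_{ij}$ contributes its $x_i$-term as the rev-lex-largest, so the product contributes $M$), and then argue semicontinuity: the locus of matrices $\mathcal{L}$ for which $\Gin(D_{\mathcal{L}}(I)) = I$ is open and nonempty, hence all distraction matrices (which by property 1 satisfy the genericity needed for each factor) lie in it. Since \cite{distract} already carries out this analysis, the proof in the paper can legitimately be one line: ``This is \cite[Theorem 4.3]{distract}.''

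Given the structure of the excerpt, I expect the author's actual ``proof'' to be exactly that citation, with the substantive work deferred to \cite{distract}. If a self-contained argument were wanted, the order of steps would be: (i) recall $h(R/D_{\mathcal{L}}(I)) = h(R/I)$ from Proposition 5.3(3) and $h(R/\Gin(J)) = h(R/J)$ in general; (ii) recall $\Gin(J)$ is strongly stable; (iii) prove the triangularity/leading-term statement for $D_{\mathcal{L}}$; (iv) conclude $I \subseteq \Gin(D_{\mathcal{L}}(I))$ in each degree; (v) upgrade to equality via the Hilbert function coincidence. Step (iii) is the crux and is the only place real work happens.
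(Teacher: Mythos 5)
You read the paper correctly: Theorem 5.4 is a quoted result, introduced with ``The following result \cite[Theorem 4.3]{distract} will lead us to the answer of this question,'' and no proof is given in the paper, so your one-line citation is exactly what the author does.

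One warning about your optional self-contained sketch, in case you ever write it out: the leading-term claim is backwards for the rev-lex order. With $x_1 > x_2 > \cdots > x_n$ (the convention used throughout this paper, e.g.\ in the proof of Lemma 3.1), the rev-lex \emph{largest} variable appearing in a lower-triangular form $L_{ij} = \sum_{t \le i} c_{ijt}x_t$ is $x_1$, not $x_i$; the $x_i$-term is the rev-lex smallest. Hence for generic such coefficients the initial term of $D_{\mathcal{L}}(M)$ is a scalar times $x_1^{\deg M}$, not $M$, and the generator-by-generator triangularity argument collapses. To make a direct computation work one would need the factors $L_{ij}$ supported on $x_i,\ldots,x_n$, and even then one only controls an initial ideal $\mathrm{in}(D_{\mathcal{L}}(I))$, which is not automatically the \emph{generic} initial ideal; bridging that gap (or arguing via Hilbert functions of generic linear sections, which is how such statements are usually proved) is precisely the content deferred to \cite{distract}. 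Since your answer ultimately rests on that citation, this does not affect its correctness as a match for the paper's (non-)proof.
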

So from Proposition 2.2 it follows that also $R/D_{\mathcal{L}}( \mathcal{W}_{m}(h))$ has $m$-times the weak Lefschetz property.  \\

We still want to show  that $D_{\mathcal{L}}( \mathcal{W}_{m}(h))_{\le k_{1}}$ 
is an ideal of a finite set of points. For this we need to recall the following notion:
 \newtheorem{raddistr}[distrmatr]{Definition}
 \begin{raddistr}
 Let $I = (x_{i_{1}}^{a_{1}},\ldots,x_{i_{r}}^{a_{r}}) \subset R$ be an irreducible monomial ideal and let $S = \{s=(s_{1},\ldots,s_{r})~|~ 1\le s_{i} \le a_{i}~ , \forall~ i = 1,\ldots,r\}$. Let $\mathcal{L}$ be a distraction matrix, and let $V_{s}$ be the $K$-vector space generated by $\{L_{i_{1}s_{1}},\ldots, L_{i_{r}s_{r}}\}$. If  $V_{s} \neq V_{s'}$, $\forall ~s, s' \in S$ $(s \neq s')$,  we say that $\mathcal{L}$ is \emph{radical for $I$}.
 
 More generally, if $I$ is any monomial ideal, we say that $\mathcal{L}$ is \emph{radical for $I$} if $\mathcal{L}$ is radical for all the irreducible components of $I$.
  \end{raddistr}
The following result will show us how we need to choose the distraction matrix $\mathcal{L}$ in order to obtain the desired construction (see \cite[Corollary 4.10]{distract}). 
 \newtheorem{points}[distrmatr]{Proposition}
 \begin{points}
 Let $I \subset \k{n-1}$ be a zero-dimensional strongly stable monomial ideal, and let $\mathcal{L}$ be a distraction matrix which is radical for $I$, and whose entries are in the polynomial ring $R= \k{n}$. 
 
 Then $D_{\mathcal{L}}(I)$ is the ideal of a finite set of points in $\mathbb{P}_{K}^{n-1}$ such that $\Gin(D_{\mathcal{L}}(I)) = IR$.
 \end{points}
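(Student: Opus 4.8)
The plan is to treat the two assertions separately, using Theorem 5.7 for the statement about $\Gin$ and the irreducible decomposition of monomial ideals for the statement about points. For $\Gin(D_{\mathcal{L}}(I)) = IR$, I would first note that the extension $IR \subset R$ is again a strongly stable monomial ideal, since its minimal monomial generators are exactly those of $I$ and involve only $x_{1}, \ldots, x_{n-1}$, so the stability condition is inherited; moreover, by Proposition 5.3(2), $D_{\mathcal{L}}(IR)$ is generated by the polynomials $D_{\mathcal{L}}(G)$, $G \in \Gens(I)$, hence coincides with $D_{\mathcal{L}}(I)$ as an ideal of $R$. Theorem 5.7 applied to the strongly stable ideal $IR$ then yields $\Gin(D_{\mathcal{L}}(I)) = \Gin(D_{\mathcal{L}}(IR)) = IR$.

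For the claim that $D_{\mathcal{L}}(I)$ is the ideal of a finite set of points, I would argue in two steps. First, $h(R/D_{\mathcal{L}}(I)) = h(R/IR)$ by Proposition 5.3(3); since $R/IR \cong (\k{n-1}/I)[x_{n}]$ and $\k{n-1}/I$ is finite-dimensional over $K$ (as $I$ is zero-dimensional), this Hilbert function is eventually constant, so $R/D_{\mathcal{L}}(I)$ has Krull dimension one and its zero locus in $\mathbb{P}^{n-1}_{K}$ is a finite scheme. Second, I would show $D_{\mathcal{L}}(I)$ is radical and saturated. Take the irreducible decomposition $I = \bigcap_{s} Q_{s}$ into irreducible monomial ideals; since $I$ is zero-dimensional, $\sqrt{I} = (x_{1}, \ldots, x_{n-1})$, and this maximal ideal must equal $\sqrt{Q_{s}}$ for every $s$, so each $Q_{s}$ has the form $(x_{1}^{a_{1}}, \ldots, x_{n-1}^{a_{n-1}})$ with all $a_{k} \ge 1$. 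Now I would invoke the results of \cite{distract} on distractions: $D_{\mathcal{L}}$ commutes with this intersection, $D_{\mathcal{L}}(I) = \bigcap_{s} D_{\mathcal{L}}(Q_{s})$, and for an irreducible $Q = (x_{1}^{a_{1}}, \ldots, x_{n-1}^{a_{n-1}})$ one has
\[ D_{\mathcal{L}}(Q) = \Bigl( \prod\nolimits_{j=1}^{a_{1}} L_{1j}, \, \ldots, \, \prod\nolimits_{j=1}^{a_{n-1}} L_{n-1,j} \Bigr) = \bigcap_{s \in S} (L_{1 s_{1}}, \ldots, L_{n-1, s_{n-1}}), \]
where $S = \{(s_{1}, \ldots, s_{n-1}) : 1 \le s_{k} \le a_{k}\}$; the second equality uses that non-proportional linear forms are coprime together with the hypothesis that $\mathcal{L}$ is radical for $Q$, which forces the subspaces $V_{s} = \langle L_{1 s_{1}}, \ldots, L_{n-1, s_{n-1}} \rangle$ to be pairwise distinct, each of dimension $n-1$ by property 1 of a distraction matrix. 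Each ideal $(L_{1 s_{1}}, \ldots, L_{n-1, s_{n-1}})$ is generated by $n-1$ linearly independent linear forms of $R$, hence is the saturated prime ideal of a point of $\mathbb{P}^{n-1}_{K}$; thus $D_{\mathcal{L}}(I)$ is a finite intersection of such point-ideals and is therefore radical, saturated, and equal to the ideal of the corresponding finite set of points.

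The step I expect to be the main obstacle is the displayed double claim about distractions: that $D_{\mathcal{L}}(-)$ commutes with finite intersections of monomial ideals, and that it sends an irreducible $(x_{1}, \ldots, x_{n-1})$-primary monomial ideal to the intersection of the evident linear primes. Proving these from first principles requires a careful analysis of how $D_{\mathcal{L}}$ interacts with colon ideals and with the combinatorics of the exponent sets, and it is exactly here that the ``radical for $I$'' hypothesis is needed; this is the content of \cite[Section 4]{distract}, and granting it, the remaining bookkeeping above is routine.
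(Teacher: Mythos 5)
The paper does not prove this proposition at all: it is quoted verbatim as \cite[Corollary 4.10]{distract}, so there is no internal argument to compare yours with. Judged on its own terms, your sketch is a sound reconstruction of the argument from \cite{distract}, but it is not self-contained at the one point that matters. The first half is fine: $IR$ is strongly stable in $R$ with the same minimal generators as $I$, the identification $D_{\mathcal{L}}(IR)=D_{\mathcal{L}}(I)$ follows from Proposition 5.3(2), and the statement $\Gin(D_{\mathcal{L}}(J))=J$ for strongly stable monomial $J$ is Theorem 5.4 of the paper (not ``Theorem 5.7'' as you cite it), giving $\Gin(D_{\mathcal{L}}(I))=IR$. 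In the second half, the reduction to irreducible components, the observation that zero-dimensionality forces each component to have the form $(x_{1}^{a_{1}},\ldots,x_{n-1}^{a_{n-1}})$, and the endgame identifying $D_{\mathcal{L}}(I)$ with an intersection of ideals of distinct points are all correct \emph{granting} the displayed identity. But the two facts you defer --- that $D_{\mathcal{L}}$ commutes with finite intersections of monomial ideals, and that $D_{\mathcal{L}}(Q)=\bigcap_{s\in S}(L_{1s_{1}},\ldots,L_{n-1,s_{n-1}})$ for an irreducible component $Q$ --- are precisely the substance of the corollary being proved, and your parenthetical justification of the latter (coprimality of non-proportional linear forms plus distinctness of the $V_{s}$) only yields the easy inclusion $\subseteq$; the reverse inclusion requires an unmixedness/complete-intersection or induction-on-exponents argument, and that is exactly where the hypothesis that $\mathcal{L}$ is radical for $I$ does its work. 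Since you flag this explicitly, the proposal should be read as a correct reduction of the quoted corollary to \cite[Section 4]{distract}, not as an independent proof; as such it is consistent with, and no less complete than, the paper's own treatment, which consists of the citation alone.
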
 
 First let us notice that by Proposition 5.3 we have:
\begin{displaymath}
 D_{\mathcal{L}}( \mathcal{W}_{m}(h))_{\le k_{1}} = (D_{\mathcal{L}}(M) ~|~ M \in \Gens(\mathcal{W}_{m}(h)) , deg(M)\le k_{1}).
 \end{displaymath}
and that by construction the generators of $(\mathcal{W}_{m}(h))_{\le k_{1}}$ are the generators of  $\mathcal{W}_{m-1}(\Delta h)$ so they are monomials in $x_{1},\ldots,x_{n-1}$.\\
We choose $\mathcal{L}$ to be a distraction matrix such that the first $(n-1)$ lines form a distraction matrix $\mathcal{L'}$ that is radical for $\mathcal{W}_{m-1}(\Delta h)$, and has entries is \k{n}. So  $D_{\mathcal{L'}}(\mathcal{W}_{m-1}(\Delta h)) = D_{\mathcal{L}}((\mathcal{W}_{m}(h))_{\le k_{1}}) = D_{\mathcal{L}}( \mathcal{W}_{m}(h))_{\le k_{1}}$. Together with the arguments presented so far in this section, this proves the following:

 \newtheorem{proppoints}[distrmatr]{Proposition}
 \begin{proppoints}
 Let $\mathcal{L}$ be a distraction matrix such that the first $n-1$ lines form a distraction matrix $\mathcal{L'}$ that is radical for $\mathcal{W}_{m-1}(\Delta h)$. Then :\\
 $R/D_{\mathcal{L}}(\mathcal{W}_{m}(h))$ has $m$-times the WLP, \HF~ $h$, the same Betti numbers as $R/\mathcal{W}_{m}(h)$
  and the ideal $D_{\mathcal{L}}( \mathcal{W}_{m}(h))_{\le k_{1}}$ is the ideal of a finite set of rational points in $\mathbb{P}_{K}^{n}$. 
 \end{proppoints}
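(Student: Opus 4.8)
The plan is to assemble this Proposition from the pieces already established in the section, so the proof is essentially a matter of bookkeeping. First I would observe that $D_{\mathcal{L}}(\mathcal{W}_m(h))$ is a homogeneous ideal with the same Hilbert function and the same graded Betti numbers as $\mathcal{W}_m(h)$: this is immediate from Proposition 5.3, parts (1), (3) and (4), applied to the monomial ideal $\mathcal{W}_m(h)$ (which is strongly stable, hence monomial). In particular $R/D_{\mathcal{L}}(\mathcal{W}_m(h))$ has Hilbert function $h$.

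Next I would address the $m$-times WLP. Since $\mathcal{W}_m(h)$ is strongly stable, Theorem 5.5 gives $\Gin(D_{\mathcal{L}}(\mathcal{W}_m(h))) = \mathcal{W}_m(h)$. By Proposition 3.2, $R/\mathcal{W}_m(h)$ has $m$-times the WLP, and by Proposition 2.2 this property transfers between an ideal and its generic initial ideal; hence $R/D_{\mathcal{L}}(\mathcal{W}_m(h))$ has $m$-times the WLP as well. This uses only $\mathcal{L}$ being a distraction matrix, not yet the radicality of $\mathcal{L}'$.

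Finally I would identify $D_{\mathcal{L}}(\mathcal{W}_m(h))_{\le k_1}$ with an ideal of points. The key observation, already noted just before the statement, is that the minimal generators of $(\mathcal{W}_m(h))_{\le k_1}$ coincide with those of $\mathcal{W}_{m-1}(\Delta h)$ and are monomials in $x_1,\ldots,x_{n-1}$ only; combined with Proposition 5.3(2), this yields $D_{\mathcal{L}}(\mathcal{W}_m(h))_{\le k_1} = D_{\mathcal{L}'}(\mathcal{W}_{m-1}(\Delta h))$, where $\mathcal{L}'$ consists of the first $n-1$ rows of $\mathcal{L}$. Since $\mathcal{W}_{m-1}(\Delta h)$ is a zero-dimensional strongly stable monomial ideal of $\k{n-1}$ and $\mathcal{L}'$ is assumed radical for it with entries in $R = \k{n}$, Proposition 5.8 applies directly and shows that $D_{\mathcal{L}'}(\mathcal{W}_{m-1}(\Delta h))$ is the ideal of a finite set of rational points in $\mathbb{P}^{n-1}_K$ (equivalently, viewed inside $R$, a finite set of points in $\mathbb{P}^{n}_K$ lying on a hyperplane).

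The only point that needs a word of care — and the place I would regard as the main obstacle — is checking that a distraction matrix $\mathcal{L}$ of the required form actually exists: one must produce an $n$-distraction matrix whose first $n-1$ rows form a distraction matrix $\mathcal{L}'$ radical for $\mathcal{W}_{m-1}(\Delta h)$ with entries in $\k{n}$, while the full matrix $\mathcal{L}$ still satisfies condition (1) of Definition 5.1 (every transversal spans $R_1$). Since $K$ is infinite, a generic choice of linear forms works, and the radicality condition is a nonempty Zariski-open condition on finitely many coefficients; I would spell this out briefly, or simply cite the existence discussion in \cite{distract}. Once existence is granted, the three bullet points of the conclusion follow by concatenating the arguments above, so the proof is short.
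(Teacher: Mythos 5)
Your proof is correct and follows essentially the same assembly as the paper: Proposition 5.3 for the Hilbert function and graded Betti numbers, the theorem $\Gin(D_{\mathcal{L}}(I))=I$ for strongly stable $I$ combined with Proposition 2.2 for the $m$-times WLP, and the identification $D_{\mathcal{L}}(\mathcal{W}_{m}(h))_{\le k_{1}} = D_{\mathcal{L}'}(\mathcal{W}_{m-1}(\Delta h))$ together with the cited result on radical distractions of zero-dimensional strongly stable ideals for the points claim. The existence discussion you flag as the main obstacle is not actually needed, since the proposition is conditional on such an $\mathcal{L}$ being given (and the paper exhibits one explicitly in Section 6); your parenthetical reconciling $\mathbb{P}^{n-1}_{K}$ with the $\mathbb{P}^{n}_{K}$ in the statement is a reasonable reading of what appears to be a typo.
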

This proposition is a  generalization of  the results obtained by T.Harima, J.C.Migliore, U.Nagel and J.Watanabe in \cite[Theorem 3.20]{migliore}.

\section{Examples}

Let $h: 1 , 4 , 7 , 8 , 7 , 4 , 1$ be our given O-sequence and let $R = K[x,y,z,t]$.  We will have $\Delta h: 1 , 3 , 3 , 1$ and $\Delta^{2}h: 1, 2$. So we see that $h$ is a $2$-times weak Lefschetz O-sequence. 

We will construct $\mathcal{W}_{2}(h)$ as well as $\mathcal{W}_{1}(h)$ and see that they are different. 

Let us first construct $\mathcal{W}_{2}(h)$. We start with the lex-segment ideal of $\Delta^{2}h $ which is the ideal 
\[\Lex(\Delta^{2}h ) = (x^{2}, xy, y^{2}) \subset K[x, y].\]
 The ideal $\Lex(\Delta^{2}h )S$, where $S = K[x, y, z]$ will have the \HF: 
 \[1, 3, 3, 3, 3, \ldots\]
 The  monomials of $S$ of degree $d > 2$ that are not in $\Lex(\Delta^{2}h )S$   will be:
\[xz^{d-1}, yz^{d-1}, z^{d}.\]
To obtain the ideal  $\mathcal{W}_{1}(\Delta h)$ we need to add to $\Lex(\Delta^{2}h )S$ the first two for  $d=3$ and the third for $d=4$. So  we get:
\[\mathcal{W}_{1}(\Delta h) = (x^{2}, xy, y^{2}, xz^{2}, yz^{2}, z^{4}).\]
Now the next step is considering the ideal $\mathcal{W}_{1}(\Delta h) R$, which will have \HF:
\[1, ~4,~ 7,~ 8,~ 8, ~8, \ldots .\] 
The  monomials of $R$ of degree $d > 3$ that are not in $\mathcal{W}_{1}(\Delta h )R$   will be:
\[z^{3}t^{d-3} , xzt^{d-2} , yzt^{d-2} , z^{2}t^{d-2} , xt^{d-1} , yt^{d-1} , zt^{d-1} , t^{d}.\]
So in order to obtain $\mathcal{W}_{2}(h)$ we need to add the first one for $d=4$, the next three for $d=5$ the next three for $d=6$ and the last one for $d=7$. So we get that 
\[ \mathcal{W}_{2}(h)=  (x^{2}, xy, y^{2}, xz^{2}, yz^{2}, z^{4}, z^{3}t , xzt^{3} , yzt^{3} , z^{2}t^{3} , xt^{5} , yt^{5} , zt^{5} , t^{7}).\]

 To construct  the ideal $\mathcal{W}_{1}(h)$ we start directly with the lex-segment ideal for $\Delta h$  in $S = K[x,y,z]$:
\begin{displaymath}  
\Lex(\Delta h) = ( x^{2}, xy, xz, y^{3}, y^{2}z, yz^{2}, z^{4}) 
\end{displaymath}
%END PAGE 13
The ring   $R / (\Lex(\Delta h)R)$ will have again the Hilbert function 
\[1, ~4, ~7,~ 8,~ 8,~8,~ \ldots \]
but  the  monomials of $R$ of degree $d > 3$ that are not in $\Lex(\Delta h )R$ will be this time:
\begin{displaymath}
z^{3}t^{d-3} , y^{2}t^{d-2} , yzt^{d-2} , z^{2}t^{d-2} , xt^{d-1} , yt^{d-1} , zt^{d-1} , t^{d}
\end{displaymath}
And by adding  to $\Lex(\Delta h)R$ in the first monomial for $d=4$, the next three for $d=5$ etc. we obtain: 
\begin{displaymath} 
\mathcal{W}_{1}(h) = ( x^{2}, xy, xz, y^{3}, y^{2}z, yz^{2}, z^{4}, z^{3}t , y^{2}t^{3} , yzt^{3} , z^{2}t^{3} , xt^{5} , yt^{5} , zt^{5} , t^{7}).
\end{displaymath}

We will now give an example of a particular distraction and see how it acts on $\mathcal{W}_{2}(h)$. It is easy to check that the first three lines of the following matrix form a radical distraction for  the ideal $\mathcal{W}_1(\Delta h)$ (as needed by Proposition 5.7):
\begin{displaymath}
\mathcal{L} = 
\left(
\begin{array}{cccccc}
x & x-t & x-2t & x-3t & x-3t &\ldots\\
y & y-t & y-2t & y-3t & y-3t &\ldots\\
z & z-t & z-2t & z-3t & z-3t &\ldots\\
t&t&t&t&t&\ldots
\end{array}
\right)
\end{displaymath}

As the highest degree of the generators in first three variables is $4$, we can consider $L_{ij} = L_{i4}, \forall ~ j \ge 4$. The ideal $D_{\mathcal{L}}( \mathcal{W}_{2}(h))_{\le  3}$ will be:
\begin{displaymath}
D_{\mathcal{L}}( \mathcal{W}_{2}(h))_{\le 3} = (x(x-t),~ xy,~ y(y-t),~ xz(z-t)).
\end{displaymath} 
 One can check easily  that this ideal is radical.
 
%As, for example, $y^{2} \in \mathcal{W}_{2}(h)$ but  $y^{2} \notin \mathcal{W}_{1}(h)$ we get that the two ideals are different. Lets take a look now at the Betti numbers of $R/\mathcal{W}_{2}(h)$ and $R/\mathcal{W}_{1}(h)$.
 Let us consider also the following ideal:
\begin{displaymath}
 I = (x^{2}, y^{2}, z^{2}, xyzt, xyt^{3}, xzt^{3} , yzt^{3} , xt^{5}, yt^{5}, zt^{5}, t^{7} )
 \end{displaymath}
 By Lemma 3.4 we see immediately that R/I has the WLP. 
 
 In order to have a more general picture of the Betti numbers of algebras with \HF~ $h$, we will also look at $R/\Lex(h)$. This algebra will have the highest Betti numbers possible in this case.

$ \rule{0pt}{3ex}\Lex(h) = (x^2,~xy,~xz,~xt^2,~y^3,~y^2z,~y^2t^2,~yz^3,~yz^2t,~yzt^3, yt^4,~z^5,~z^4t, $

$~z^3t^3,~z^2t^4,~zt^5,~t^7). $
  
\rule{0pt}{3ex}Now let's take a look at the Betti diagrams of the ideals constructed so far (the Betti diagram of $D_{\mathcal{L}}( \mathcal{W}_{2}(h))$ is equal to the one of $\mathcal{W}_{2}(h)$).

\begin{center}
  \begin{tabular}{@{} |c|cccc|c|c|cccc| @{}}
    \cline{1-5}
    \cline{7-11}
    ¥ & 1 & 2 & 3 & 4 & ¥~~~ & ¥ & 1 & 2 & 3 & 4 \\ 
\cline{1-5}
    \cline{7-11}
    1 & 3 & 3   & 1   & - & ¥ ~~~~ & 1 &     3 & 3 & 1 & - \\         
    2 & 3 & 6   & 4   & 2 & ¥ ~~~~ & 2&    3 & 5 & 2 & -  \\ 
    3 & 3 & 8   & 7   & 2 & ¥ ~~~~ & 3&   2 & 5 & 4 & 1 \\ 
    4 & 4 & 11 & 10 & 3 & ¥ ~~~~ & 4 &    3 & 9 & 9 & 3\\ 
    5 & 3 & 9   & 9   & 3 & ¥ ~~~~ & 5 &    3 & 9 & 9 & 3\\ 
    6 & 1 & 3   & 3   & 1 & ¥ ~~~~ & 6&      1 & 3 & 3 & 1 \\ 
  \cline{1-5}
    \cline{7-11}
\multicolumn{5}{c}{\rule{0pt}{3ex}$R/\Lex(h)$} &\multicolumn{1}{c}{~}& \multicolumn{5}{c}{$R/\mathcal{W}_{1}(h)$}\\

\multicolumn{5}{c}{\rule{0pt}{3ex}~} &\multicolumn{1}{c}{~}& \multicolumn{5}{c}{~}\\

   \cline{1-5}
 \cline{7-11}

   ¥ & 1 & 2 & 3 & 4& ¥ &  ¥ & 1 & 2 & 3 & 4 \\ 
 \cline{1-5}
 \cline{7-11}

     % \midrule
       1 & 3 & 2 & - & -   &¥ ~~~~ & 1 & 3 & - & - & -\\ 
        2 & 2 & 4 & 2 & -  &¥ ~~~~ & 2 & - & 3 & - & - \\ 
     3 & 2 & 5 & 4 & 1 &¥ ~~~~ & 3 & 1 & 3 & 4 & 1 \\ 
      4 & 3 & 9 & 9 & 3 &¥ ~~~~ & 4 & 3 & 9 & 9 & 3 \\ 
    5 & 3 & 9 & 9 & 3 &¥ ~~~~ & 5 & 3 & 9 & 9 & 3 \\ 
      6 & 1 & 3 & 3 & 1 &¥ ~~~~ & 6 & 1 & 3 & 3 & 1  \\ 
\cline{1-5}
 \cline{7-11}
\multicolumn{4}{c}{}{\rule{0pt}{3ex}$R/\mathcal{W}_{2}(h)$} &\multicolumn{2}{c}{~}& \multicolumn{5}{c}{$R/I$}\\
  \end{tabular}
\end{center}   
  
   %\bottomrule
  %  \begin{tabular}{@{} c|cccc @{}}
   %\toprule
%    ¥ & 0 & 1 & 2 & 3 \\ 
% \hline
     % \midrule
%    2 & 3 & 2 & - & - \\ 
%    3 & 2 & 4 & 2 & - \\ 
 %   4 & 2 & 5 & 4 & 1 \\ 
 %   5 & 3 & 9 & 9 & 3 \\ 
 %   6 & 3 & 9 & 9 & 3 \\ 
 %   7 & 1 & 3 & 3 & 1 \\ 
   %\bottomrule
%  \end{tabular}
 
%  \begin{tabular}{@{} c|cccc @{}}
   %\toprule
%    ¥ & 0 & 1 & 2 & 3 \\ 
% \hline
     % \midrule
 %   2 & 3 & - & - & - \\ 
  %  3 & - & 3 & - & - \\ 
 %   4 & 1 & 3 & 4 & 1 \\ 
 %   5 & 3 & 9 & 9 & 3 \\ 
%    6 & 3 & 9 & 9 & 3 \\ 
 %   7 & 1 & 3 & 3 & 1 \\ 
   %\bottomrule
%  \end{tabular}

%  \caption{TableCaption}
%  \label{tab:label}
 
 We can notice that $R/\Lex(h)$ has  the largest Betti numbers. Just as predicted, $R/\mathcal{W}_{1}(h)$ has larger Betti numbers than $R/\mathcal{W}_{2}(h)$and $R/I$. We can also notice that the inequality is strict in some cases. The fact that the Betti numbers of $R/\mathcal{W}_{2}(h)$ are all larger than the ones of $R/I$ is just a coincidence.

D{\small IPARTIMENTO DI}  M{\small ATEMATICA,} U{\small NIVERSITA` DI} G{\small ENOVA,}
 G{\small ENOVA,} I-16146  I{\small TALY}\\
\emph{E-mail address:} \texttt{constant@dima.unige.it}

\end{document}